\newtheorem{theorem}{Theorem}
\newtheorem{lemma}{Lemma}
\let\NAT@parse\undefined
\newtheorem{innerexample}{Example}
\popQED\end{innerexample}}
\title{\LARGE \bf
Half-Global Deadbeat Parking for Dubins Vehicle
}
\author{Miroslav Krstić, Kwang Hak Kim, and Velimir Todorovski
\thanks{This work was supported in part by the Office of Naval Research under Grant No. N00014-23-1-2376, in part by the Air Force Office of Scientific Research under Grant No. FA9550-23-1-0535, and in
part by the National Science Foundation under Grant No. ECCS-2151525. The results and opinions in this paper are solely of the authors and do not reflect the position or the policy of the U.S. Government or the National Science
Foundation.}
\thanks{M. Krstić, K. Kim, and V. Todorovski are with the Department of Mechanical and Aerospace Engineering, UC San Diego, 9500 Gilman Drive, La Jolla, CA, 92093-0411, {\tt\small \{krstic,kwk001,vtodorovski\}@ucsd.edu}}%
}
\begin{document}

\maketitle
\thispagestyle{empty}
\pagestyle{empty}


\begin{abstract}
This paper presents a framework for stabilizing the Dubins vehicle model to zero in finite time (deadbeat parking) by interpreting distance as a time-like variable. We develop control laws that bring the system to a desired position and orientation even when the forward velocity cannot be directly actuated. While the controllers employ inverse-distance gains, we show that the control input remains bounded for all time. In addition to basic deadbeat parking, we incorporate safety considerations by proposing algorithms that prevent the vehicle from crossing in front of the target, enforce deceleration as it approaches the target, and guarantee parking without curb violations. The resulting methods are well-suited for missile guidance and fixed-wing pursuit, but are broadly applicable to physical systems that are represented by the Dubins vehicle model.
\end{abstract}


\section{Introduction}

A benchmark model for motion planning and control for vehicles with constant forward velocity and curvature constraints is classically the Dubins vehicle model \cite{dubins1957curves} which establishes that shortest paths for a forward-only vehicle consist of circular arcs and straight segments, a result that has since become foundational across robotics, guidance, and other nonholonomic systems. Extensions have been developed in \cite{reeds1990optimal} to cars capable of both forward and backward motion, further enriching the geometric framework for systems with nonholonomic constraints in the context of path planning. 

In addition, the stabilization of unicycle-type kinematics has been extensively studied. However, it is a notoriously difficult problem due to Brockett’s necessary condition \cite{brockett1983asymptotic}, which establishes that no smooth, time-invariant feedback law asymptotically stabilizes such systems to a point. Many have addressed this with a variety of methods including time-varying feedback \cite{pomet1992explicit,coron1993smooth}, discontinuous feedback \cite{de2000stabilization,bloch1996stabilization_slidingmode}, and hybrid control strategies \cite{hespanha1999_hybrid_stabilization,prieur2003robust}. Another widely adopted technique is the polar coordinate transformation, which bypasses Brockett’s condition by introducing a singularity into the system dynamics and has motivated a broad range of stabilizing control designs \cite{todorovski2025_CLF,Kim2025_IOC,badreddin1993fuzzy,aicardi1995,astolfi1999exponential}. In this paper, we use the polar coordinate transformation to design controllers that bring the Dubins vehicle model to a desired position and orientation in finite time (deadbeat parking) under steering-only actuation, a setting that is particularly relevant to missile guidance, fixed-wing aircraft pursuit, and other constrained vehicle applications.

In missile guidance and pursuit problems, two broad classes of control laws have emerged. The first seeks to guarantee zero terminal miss distance without regard for the interceptor’s orientation at impact. Proportional navigation (PN) and its variants, the classically predominant method, drive the line-of-sight error to zero to achieve reliable intercepts under multitudes of conditions \cite{zarchan2012tactical,siouris2004missile}. However, the classical PN algorithms ignore the terminal orientation, which may be critical in certain applications where it is strategically valuable to approach from different directions, such as a rear-end intercept to match velocities for capture, a head-on approach to maximize impact, or a broadside intercept at roughly 90 degrees to exploit vulnerabilities like blind spots from sensors or exposed flanks.

More recently, missile guidance systems have widely adopted the use of linear quadratic (LQ) optimal controllers for their capabilities of handling more complex scenarios and with optimality guarantees \cite{palumbo2010modern}. Particularly, works such as \cite{ryoo2005optimal,ryoo2006time,shaferman2008linear} have utilized the LQ formulation to specify the terminal orientation of the interceptor. The use of LQ optimal controllers, however, is limited by the requirement that the system be linear or linearized under restrictive assumptions. For instance, \cite{ryoo2005optimal} assumes a small initial line-of-sight angle, a condition that is often not satisfied in practice.

By interpreting distance-to-target as a time-like variable, we design half-global deadbeat parking controllers for the fully nonlinear Dubins vehicle model. The resulting control laws employ inverse-distance gains but remain bounded for all time. Beyond basic deadbeat parking, we extend the results to safety-critical scenarios while still guaranteeing finite-time convergence. Motivated by practical constraints, we extend the framework to guarantee: (i) parking without passing in front of the target, (ii) deceleration as the vehicle approaches the target, and (iii) parking without violating environmental boundaries such as curbs.

\section{Exploiting Distance as Time}

Consider the polar coordinate representations of the unicycle model given as
\begin{subequations}
\label{eq:unicycle_polar}
\begin{eqnarray}
\label{eq:unicycle_polar-rho}
\dot{\rho} &=& 
-v \cos\gamma\\
\label{eq:unicycle_polar-delta}
\dot{\delta} &=&  \frac{v}{\rho} \sin\gamma
\\
\label{eq:unicycle_polar-gamma}
\dot{\gamma} &=&  \frac{v}{\rho} \sin\gamma -\omega  \,,
\end{eqnarray} 
\end{subequations}
where $\rho > 0$ is the distance to the origin, $\delta,\gamma \in \mathbb{R}$ are the polar and line-of-sight angles, respectively, and $v$ and $\omega$ represent the forward velocity and steering control input, respectively. Here, if $v=\mbox{\rm const}>0$, we have the Dubins vehicle model. The transformations of the angles $\delta$ and $\gamma$ from Cartesian coordinates are defined as $\delta = \mbox{mod}(\text{{\rm atan2}}(y ,x ),2\pi) - \pi$ and $\gamma = \mbox{mod}(\text{{\rm atan2}}(y ,x )-\theta, 2\pi) - \pi$. Finally, without loss of generality, we assume the target position and orientation is the origin for the rest of this paper.

A key observation relied on by all the ideas introduced in the paper is that, in unicycles with a constant velocity $v$, as long as the LoS angle $\gamma(t)$ is made to converge, by control design, to $0$, the progression of $\rho(t)$ will be proportional to the progression of time: $\rho(t) = \rho_0-vt$, where $v\neq 0$. 

This means that our design considerations can be translated from time $t$ to the distance $\rho$. In deadbeat parking, we treat the inverse of the distance $1/ \rho$ as a time-like variable since our goal is to drive the distance $\rho$ to zero in finite time, which can be no sooner than $v/\rho_0$ because $\gamma(t)$ cannot be identically 0 but only made to converge to 0 by feedback.

The formal treatment of the distance $\rho$ as time-like, by dividing \eqref{eq:unicycle_polar-delta} and \eqref{eq:unicycle_polar-gamma} by \eqref{eq:unicycle_polar-rho}, results in
\begin{subequations}
\label{eq:unicycle_polardrho}
\begin{eqnarray}
\label{eq:unicycle_polar-deltadrho}
\frac{d\delta}{d\rho} &=& - \frac{1}{\rho}\tan\gamma \\
\label{eq:unicycle_polar-gammadrho}
\frac{d\gamma}{d\rho} &=&- \frac{1}{\rho}\tan\gamma +\frac{\omega}{v\cos\gamma}  \,.
\end{eqnarray} 
\end{subequations}

Next we give two technically straightforward but conceptually intriguing lemmas. In the first of the two, we outline the objectives of the Lyapunov-based control design when the goal is to drive the distance to zero. The second lemma establishes conditions under which $\rho(t)$ exhibits useful time-like behavior, decreasing to zero in finite time and at least as fast as linearly. 

\begin{lemma}
\label{lem1}
Let $a>0$ and let $V:[0,\rho_0]\to\mathbb{R}_{\ge 0}$ be a continuously differentiable function. Then, for $\rho\in(0,\rho_0]$,
\begin{itemize}
\item $\dfrac{dV}{d\rho} \geq \dfrac{a}{\rho}V$ implies $V(\rho)\leq V(\rho_0) \left(\dfrac{\rho}{\rho_0}\right)^a$. 
\item $\dfrac{dV}{d\rho} \geq \dfrac{a}{\rho^2}V$ implies $V(\rho)\leq V(\rho_0) {\rm e}^{a\left(1/{\rho_0}-{1}/{\rho}\right)}$. 
\end{itemize}
\end{lemma}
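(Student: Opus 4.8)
The plan is to prove both bounds by the standard integrating‑factor (comparison) trick, reading each differential inequality as a monotonicity statement for a suitably rescaled version of $V$. For the first bound, I would introduce $W(\rho) := V(\rho)\,\rho^{-a}$ on $(0,\rho_0]$. Since $V$ is $C^1$ and $\rho>0$, $W$ is $C^1$ there, and a direct computation gives $W'(\rho) = \rho^{-a}\bigl(\tfrac{dV}{d\rho} - \tfrac{a}{\rho}V(\rho)\bigr) \ge 0$ by hypothesis. Hence $W$ is nondecreasing on $(0,\rho_0]$; since $\rho\le\rho_0$ this yields $W(\rho)\le W(\rho_0)$, i.e. $V(\rho)\rho^{-a}\le V(\rho_0)\rho_0^{-a}$, which rearranges to $V(\rho)\le V(\rho_0)(\rho/\rho_0)^a$.

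For the second bound I would use the integrating factor $e^{a/\rho}$ in place of $\rho^{-a}$: set $W(\rho) := V(\rho)\,e^{a/\rho}$, again $C^1$ on $(0,\rho_0]$. Differentiating, $W'(\rho) = e^{a/\rho}\bigl(\tfrac{dV}{d\rho} - \tfrac{a}{\rho^2}V(\rho)\bigr) \ge 0$, so $W$ is nondecreasing, and $\rho\le\rho_0$ gives $V(\rho)e^{a/\rho}\le V(\rho_0)e^{a/\rho_0}$, i.e. $V(\rho)\le V(\rho_0)e^{a(1/\rho_0-1/\rho)}$. In both cases the exponent in the multiplier is chosen precisely so that the $\tfrac{a}{\rho}V$ (resp. $\tfrac{a}{\rho^2}V$) term cancels in $W'$, leaving only the sign of the hypothesis.

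The only points requiring care — and they are mild — are: (i) the factors $\rho^{-a}$ and $e^{a/\rho}$ are singular at $\rho=0$, so the monotonicity argument is run on $(0,\rho_0]$ only (equivalently on each $[\rho,\rho_0]$ with $\rho>0$), which is exactly the range asserted in the statement; and (ii) the direction of the inequality — since we effectively compare values at $\rho\le\rho_0$ with a nondecreasing $W$, we obtain an \emph{upper} bound on $V(\rho)$, with contractive factors $(\rho/\rho_0)^a\le 1$ and $e^{a(1/\rho_0-1/\rho)}\le 1$, consistent with $V\ge 0$. Nonnegativity of $V$ is in fact not used in deriving the bounds; it is only a sanity check on their sign. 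I do not anticipate any genuine obstacle: once the correct multiplier is identified, each case is a one‑line Grönwall‑type comparison.
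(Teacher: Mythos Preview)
Your proposal is correct and is essentially the same Gr\"onwall/comparison argument as the paper's: the paper rewrites the inequalities via the change of independent variable $\rho\mapsto\ln(\rho_0/\rho)$ (resp.\ an analogous reparametrization for the second case) and invokes the comparison principle, which is exactly your integrating-factor monotonicity of $W(\rho)=V(\rho)\rho^{-a}$ (resp.\ $W(\rho)=V(\rho)e^{a/\rho}$) stated in equivalent language. No substantive difference.
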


\begin{proof}
In Appendix.
\end{proof}

\begin{lemma}
\label{lem2}
Consider $\dot\rho = - v\cos\gamma$ for $v>0$, let $\alpha\in\mathcal{K}$, and denote
\begin{equation}
t_1 = \frac{\rho_0}{v}\sqrt{1+\alpha(1)}\,.
\end{equation}
For all $t\in [0,T)$  for which the solution exists, if $\cos\gamma(t)>0$ and $\tan^2\gamma(t)\leq \alpha(\rho(t)/\rho_0)$ for all $t\in[0,T)$, then $\rho(t)\leq \rho_0(1-t/t_1)$ for all $t\in[0,T)$.
\end{lemma}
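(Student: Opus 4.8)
The plan is to convert the state-dependent bound on $\tan^2\gamma$ into a uniform lower bound on $\cos\gamma$, and then integrate the resulting differential inequality for $\rho$.

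First I would exploit the hypothesis $\cos\gamma(t)>0$ on $[0,T)$: it forces $\dot\rho(t)=-v\cos\gamma(t)<0$, so $\rho$ is strictly decreasing on $[0,T)$ and hence $\rho(t)\le\rho_0$, i.e.\ $\rho(t)/\rho_0\in(0,1]$. Since $\alpha\in\mathcal{K}$ is nondecreasing, this yields $\tan^2\gamma(t)\le\alpha(\rho(t)/\rho_0)\le\alpha(1)$ for all $t\in[0,T)$. Next, because $\cos\gamma(t)>0$, the identity $\cos\gamma=1/\sqrt{1+\tan^2\gamma}$ applies, and the bound on $\tan^2\gamma$ gives $\cos\gamma(t)\ge 1/\sqrt{1+\alpha(1)}$. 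Substituting into $\dot\rho=-v\cos\gamma$ produces the constant-coefficient differential inequality
\begin{equation}
\dot\rho(t)\;\le\;-\frac{v}{\sqrt{1+\alpha(1)}}\;=\;-\frac{\rho_0}{t_1}\,,\qquad t\in[0,T)\,.
\end{equation}

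Finally I would integrate this inequality from $0$ to $t$, using $\rho(0)=\rho_0$, to obtain $\rho(t)\le\rho_0-(\rho_0/t_1)\,t=\rho_0(1-t/t_1)$ for all $t\in[0,T)$, which is the claim.

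The only real subtlety — rather than an obstacle — is the first step: one must observe that the sign condition $\cos\gamma>0$ already guarantees monotone decrease of $\rho$, so that the \emph{state-dependent} gain $\alpha(\rho/\rho_0)$ can be replaced by the \emph{constant} $\alpha(1)$; without this, the differential inequality would not have constant coefficients and the clean linear bound would not follow. Everything after that is a one-line integration.
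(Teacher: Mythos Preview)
Your proof is correct and follows essentially the same route as the paper: both use the identity $\cos\gamma=1/\sqrt{1+\tan^2\gamma}$ for $\cos\gamma>0$, exploit $\dot\rho<0$ to get $\rho/\rho_0\le 1$ and hence replace $\alpha(\rho/\rho_0)$ by $\alpha(1)$, and then integrate (the paper phrases the last step as the comparison principle). Your ordering is slightly more explicit, but the argument is the same.
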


\begin{proof}
In Appendix.
\end{proof}

\section{Deadbeat Parking Backstepping Design}

By interpreting distance as a time-like variable, we design deadbeat parking controllers for \eqref{eq:unicycle_polar} under a constant forward velocity $v$, typically referred to as the Dubins vehicle model.

\begin{theorem}
\label{thm:Dubins-FT-stabilize}
For $v=\mbox{\rm const}>0$,  consider \eqref{eq:unicycle_polar}. 
Let
\begin{eqnarray}\label{eq-control-omega}
\omega &=& \dfrac{v}{\rho}\bigl\{
\sin\gamma +\cos^2\gamma\bigl[\cos\gamma
(1+c_1c_2)\delta\nonumber \\
&&+(c_1+c_2)\sin\gamma
\bigr]\bigr\}
\end{eqnarray}
and $c_1,c_2 \geq \underline c :=\min\{c_1,c_2\} >1$. For all $\rho_0>0$, $\delta_0\in\mathbb{R}$ and $\gamma_0\in(-\pi/2,\pi/2)$ the following holds:
\begin{equation}
\label{eq-rho-bound}
\rho(t)\leq \rho_0(1-t/t_1)
\end{equation}
\begin{equation}
\label{eq-deltan-bound}
B^2(t) \leq M^2(c_1)\left( 1- 
t/t_1
\right)^{2\underline c}B^2_0
\end{equation}
\begin{align}
\label{eq-omega(t)-bound}
|\omega(t)| \leq & \frac{v}{\rho_0} \,
\sqrt{2}(1+\max\{c_1c_2,c_1+c_2\})\nonumber\\
&\times M(c_1)\left( 1- t/t_1
\right)^{\underline c -1}B_0
\end{align}
where $B_0 = B(\delta_0,\gamma_0)$, $B(\delta,\gamma) \coloneqq \sqrt{\delta^2 + \tan^2\gamma}$ and
\begin{equation}
M(s) := 
1+\frac{s^2}{2}  + s\sqrt{1+\frac{s^2}{4} }\,, \quad s\geq 0
\label{eq:M}
\end{equation}
for all $t\in\left[0, \min\left\{t_1,T\right\} \right)$, where 
\begin{equation}
t_1(\rho_0,\delta_0, \gamma_0, v,c_1)= \frac{\rho_0}{v}{\sqrt{1+M^2(c_1)B^2_0}}
\end{equation}
and $T$ is the interval of existence of the system's solutions, at which $\rho(T)=0$. In particular, for every $\Omega>0$, the angular velocity $\omega(t)$ is maintained in the interval $[-\Omega,\Omega]$ by choosing the forward velocity such that 
\begin{align}\label{eq-slowdown}
v\leq& \;\Omega\dfrac{\rho_0}{\sqrt{\delta_0^2 +\tan^2\gamma_0}}\nonumber\\
&\times\frac{1}{ \sqrt{2}M(c_1)(1+\max\{c_1c_2,c_1+c_2\})}\,.  
\end{align}
\end{theorem}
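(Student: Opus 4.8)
The plan is to reduce everything to the $\rho$-domain system \eqref{eq:unicycle_polardrho} and use a backstepping construction in which $1/\rho$ plays the role of time. First I would introduce the backstepping coordinates suggested by the control law, namely $z_1=\delta$ and $z_2=\tan\gamma+c_1\delta$ (so that $B^2=z_1^2+z_2^2$ up to the obvious change of variables), and compute $d z_1/d\rho$ and $d z_2/d\rho$ along \eqref{eq:unicycle_polardrho}. Using the identity $\frac{d}{d\rho}\tan\gamma=\sec^2\gamma\,\frac{d\gamma}{d\rho}$ and substituting the feedback \eqref{eq-control-omega}, the claim is that the closed loop collapses to the clean cascade $\rho\,z_1'=z_2-c_1 z_1$ (wait — I must be careful with signs; since $d/d\rho$ runs $\rho$ downward, the stabilizing inequalities will point the way Lemma~\ref{lem1} expects), more precisely $\rho\frac{dz_1}{d\rho}=-(z_2-c_1z_1)\cdot(\text{something})$ and $\rho\frac{dz_2}{d\rho}=-(c_1+c_2)(z_2-c_1z_1)\cdot(\dots)$, yielding a triangular linear-in-$z$ system with the single ``gain'' $1/\rho$. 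The key algebraic step is verifying that the bracketed term in \eqref{eq-control-omega} is exactly what cancels the cross terms and leaves this cascade; this is the one genuinely computational lemma and I would state it as an intermediate claim.

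Next I would run the Lyapunov argument in the $\rho$-variable. Take $V=\tfrac12(z_1^2+z_2^2)$ or, better, a weighted quadratic $V=\tfrac12 z_1^2+\tfrac1{2(c_1+c_2)}z_2^2$ adapted to the cascade, show $\frac{dV}{d\rho}\ge \frac{2\underline c}{\rho}V$ (the sign being such that $V$ decays as $\rho\to0$), and invoke the first bullet of Lemma~\ref{lem1} to get $V(\rho)\le V(\rho_0)(\rho/\rho_0)^{2\underline c}$. Converting back from the weighted $V$ to $B^2=\delta^2+\tan^2\gamma$ produces the constant $M(c_1)$: the ratio between $B^2$ and the weighted norm of $z=(z_1,z_2)$ is governed by the eigenvalues of the matrix $\begin{pmatrix}1&0\\c_1&1\end{pmatrix}$ (or its Gram matrix $\begin{pmatrix}1+c_1^2&c_1\\c_1&1\end{pmatrix}$), whose largest eigenvalue is precisely $M(c_1)=1+\tfrac{c_1^2}{2}+c_1\sqrt{1+c_1^2/4}$ as defined in \eqref{eq:M}. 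So \eqref{eq-deltan-bound} follows once I combine the decay of $V$ in $\rho$ with the relation $\rho(t)=\rho_0-vt$, which itself requires Lemma~\ref{lem2}.

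To deploy Lemma~\ref{lem2} I need $\cos\gamma(t)>0$ and $\tan^2\gamma(t)\le\alpha(\rho/\rho_0)$ for a class-$\mathcal K$ function $\alpha$. From \eqref{eq-deltan-bound}, $\tan^2\gamma\le B^2\le M^2(c_1)(\rho/\rho_0)^{2\underline c}B_0^2$, so $\alpha(s)=M^2(c_1)B_0^2 s^{2\underline c}$ works (it is class-$\mathcal K$ since $\underline c>1>0$), and then $t_1=\tfrac{\rho_0}{v}\sqrt{1+\alpha(1)}=\tfrac{\rho_0}{v}\sqrt{1+M^2(c_1)B_0^2}$ matches the stated $t_1$; positivity of $\cos\gamma$ propagates because $\tan^2\gamma$ stays bounded by $\alpha(1)<\infty$ and $\gamma_0\in(-\pi/2,\pi/2)$, so $\gamma$ cannot reach $\pm\pi/2$ on $[0,\min\{t_1,T\})$ — this continuity/invariance argument (a standard ``exit-time'' contradiction) is where I would be most careful, since the bound on $\tan^2\gamma$ and the bound on $\rho$ are mutually dependent and must be closed simultaneously via a bootstrap on the maximal interval. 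This is the main obstacle: making the circular dependence between ``$\rho$ decays linearly'' (needs the $\gamma$ bound) and ``$\gamma\to0$ at rate $(\rho/\rho_0)^{\underline c}$'' (a $\rho$-domain statement that only becomes a time-domain statement after the linear decay of $\rho$) rigorous, rather than merely formal.

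Finally, \eqref{eq-omega(t)-bound} comes from plugging the bounds on $\delta,\tan\gamma$ (hence on $\sin\gamma$, $\cos\gamma\le1$) into \eqref{eq-control-omega}: the bracket is bounded by $(1+\max\{c_1c_2,c_1+c_2\})$ times a norm of $(\delta,\sin\gamma)$, which is $\le\sqrt2$ times a norm of $(\delta,\tan\gamma)=B\le M(c_1)(1-t/t_1)^{\underline c}B_0$, and the prefactor $v/\rho$ is bounded by $\tfrac{v}{\rho_0}(1-t/t_1)^{-1}$ using \eqref{eq-rho-bound}; collecting the powers of $(1-t/t_1)$ gives the exponent $\underline c-1>0$, so $\omega$ is not only bounded but vanishes at $t_1$. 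For the last sentence, one maximizes the right-hand side of \eqref{eq-omega(t)-bound} over $t$: since $\underline c-1>0$ the supremum is at $t=0$, giving $|\omega(t)|\le\tfrac{v}{\rho_0}\sqrt2(1+\max\{c_1c_2,c_1+c_2\})M(c_1)B_0$, and setting this $\le\Omega$ and solving for $v$ yields exactly \eqref{eq-slowdown} with $B_0=\sqrt{\delta_0^2+\tan^2\gamma_0}$. I would present this as a short corollary-style paragraph at the end, noting it requires no new ideas beyond the monotonicity in $t$ of the already-derived bound.
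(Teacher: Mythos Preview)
Your proposal is correct and matches the paper's approach: the paper uses exactly the backstepping variable $\zeta=\tan\gamma+c_1\delta$, the unweighted Lyapunov function $V=\delta^2+\zeta^2$ (your first option; the weighted one is unnecessary since the cross terms cancel to give $\frac{dV}{d\rho}=\frac{2}{\rho}(c_1\delta^2+c_2\zeta^2)\ge\frac{2\underline c}{\rho}V$), then Lemma~\ref{lem1}, the eigenvalue ratio $M^2(c_1)=\lambda_{\max}(P)/\lambda_{\min}(P)$ for $P=\begin{pmatrix}1+c_1^2&c_1\\c_1&1\end{pmatrix}$, and Lemma~\ref{lem2} with $\alpha(s)=M^2(c_1)B_0^2 s^{2\underline c}$. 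Your care about the bootstrap between the $\rho$-bound and the $\gamma$-bound is well placed and is the only point the paper treats somewhat implicitly.
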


\begin{proof}
First, with the backstepping transformation $\zeta = \tan\gamma + c_1 \delta$ and defining $\omega = \frac{v\cos\gamma}{\rho} \left(\tan\gamma +\cos^2\gamma \, \bar\omega\right)$, we rewrite \eqref{eq:unicycle_polar-delta} and \eqref{eq:unicycle_polar-gamma} as
\begin{subequations}
\label{eq:unicycle_polar-zeta}
\begin{align}
\label{eq:unicycle_polar-delta-zeta}
\dot{\delta} &=  \frac{v\cos\gamma}{\rho} \left(-c_1\delta+\zeta\right) 
\\
\label{eq:unicycle_polar-gamma-zeta}
\dot\zeta &= 
\frac{v\cos\gamma}{\rho}\left(c_1\tan\gamma -\bar\omega\right) \,.
\end{align} 
\end{subequations}
Introduce the Lyapunov function $V = \delta^2 + \zeta^2$. Its derivative along the solutions of \eqref{eq:unicycle_polar-zeta} is $\dot V = 2 \frac{v\cos\gamma}{\rho}\left[
-c_1\delta^2 +\zeta\left(\delta+c_1\tan\gamma - \bar\omega\right)
\right]$.
Take the control as $\bar\omega = 
c_2 \zeta + \delta + c_1\tan \gamma $, which gives \eqref{eq-control-omega} and yields
\begin{equation}
\label{eq-Vdot-closed-loop}
\dot V = -2 \frac{v\cos\gamma}{\rho}\left(
c_1\delta^2 +c_2\zeta^2\right)\,.
\end{equation}
Dividing \eqref{eq-Vdot-closed-loop} by \eqref{eq:unicycle_polar-rho} one obtains
\begin{equation}
\frac{dV}{d\rho} = \frac{2}{\rho}\left(
c_1\delta^2 +c_2\zeta^2\right) \geq 2\underline c \frac{V}{\rho}\,. \label{eq:dVdrho}
\end{equation}
From Lemma \ref{lem1}, $V(\rho) \leq \left(\dfrac{\rho}{\rho_0}\right)^{2\underline c} V(\rho_0)$, namely,
\begin{equation}
\label{eq-tandelrho0}
\delta^2+\zeta^2 \leq \left(\frac{\rho}{\rho_0}\right)^{2\underline c} \left(\delta^2_0+\zeta^2_0\right)\,.
\end{equation}
Observing that $\delta^2 + \zeta^2 = \begin{bmatrix}\delta &\tan \gamma\end{bmatrix} P(c_1)  \begin{bmatrix}\delta &\tan \gamma\end{bmatrix}^\top$ where
\begin{equation}
P(c_1) = \begin{bmatrix}
1+c_1^2 &c_1\\c_1 &1
\end{bmatrix},
\end{equation}
and with \eqref{eq-tandelrho0}, one gets  
\begin{equation}
\label{eq-tandelrho}
\delta^2+\tan^2\gamma \leq \left(\frac{\rho}{\rho_0}\right)^{2\underline c} M^2(c_1)\left(\delta^2_0+\tan^2\gamma_0\right)\,,
\end{equation}
where $M^2(c_1) = \lambda_{\rm max}(P(c_1))/\lambda_{\rm min}(P(c_1))$ and is defined in \eqref{eq:M}. Inequality \eqref{eq-tandelrho} ensures that 
\begin{equation}
\tan^2\gamma(t) \leq M^2(c_1)\left(\frac{\rho}{\rho_0}\right)^{2\underline c} ( \delta^2_0+\tan^2\gamma_0)\,,
\end{equation}
which in turn guarantees that $\gamma(t)$ remains in $(-\pi/2,\pi/2)$ and, therefore, $\cos\gamma(t)$ remains positive. 
Using Lemma \ref{lem2}, one gets \eqref{eq-deltan-bound}. From \eqref{eq-control-omega}, 
\begin{equation}
|\omega\rho| \leq v\sqrt{2}(1+\max\{c_1c_2,c_1+c_2\})\sqrt{\delta^2+\tan^2\gamma}\,.
\end{equation}
Substituting \eqref{eq-tandelrho} and with \eqref{eq-rho-bound}, one gets 
\eqref{eq-omega(t)-bound}. Finally, from \eqref{eq-omega(t)-bound}, 
\eqref{eq-slowdown} is immediate. 
\end{proof}

Due to the division by $\rho$, the feedback \eqref{eq-control-omega} is clearly not continuously differentiable at $\rho=0$, nor even bounded if $\delta\neq 0$ or $\gamma\neq 0$. However the control $\omega(t)$ not only remains bounded but converges to zero, as stated in \eqref{eq-omega(t)-bound}, because $\delta(t), \gamma(t)\rightarrow 0$ at a faster rate than $\rho(t)\rightarrow 0$. Dividing by $\rho(t)$ is not robust to measurement noise, however, as is standard in missile guidance, the distance $\rho(t)$ fed into the feedback law \eqref{eq-control-omega} would be clipped to the lower bound $\underline\rho>0$. 

Fig.~\ref{fig:sim_thrm1} shows the simulation of the control law~\eqref{eq-control-omega} applied to system~\eqref{eq:unicycle_polar}. It is important to note that since the forward velocity is fixed at a nonzero value, the controllers must be switched off as soon as $\rho(T) = 0$. To reduce numerical errors near the boundary of the interval of existence, we instead apply a cutoff to both control inputs at $\rho(t) \leq 0.01$. Fig.~\ref{fig:trajectory_thrm1} and Fig.~\ref{fig:polar_thrm1} demonstrate that the controller successfully brings the system to zero. However, as illustrated in Fig.~\ref{fig:control_thrm1}, the control inputs exhibit abrupt jumps at the cutoff points and lack smoothness, which is attributed to the $1/\rho$ gain. In the finite-time stabilization literature, it is well known that achieving smooth stabilization near the target requires a gain of at least order $1/\rho^2$. This observation motivates the development of the control law in the next Theorem, which employs a higher-order gain in $\rho$.

\begin{figure}[t]
\centering
\begin{subfigure}[b]{\linewidth}
\centering
\includegraphics[width=\linewidth]{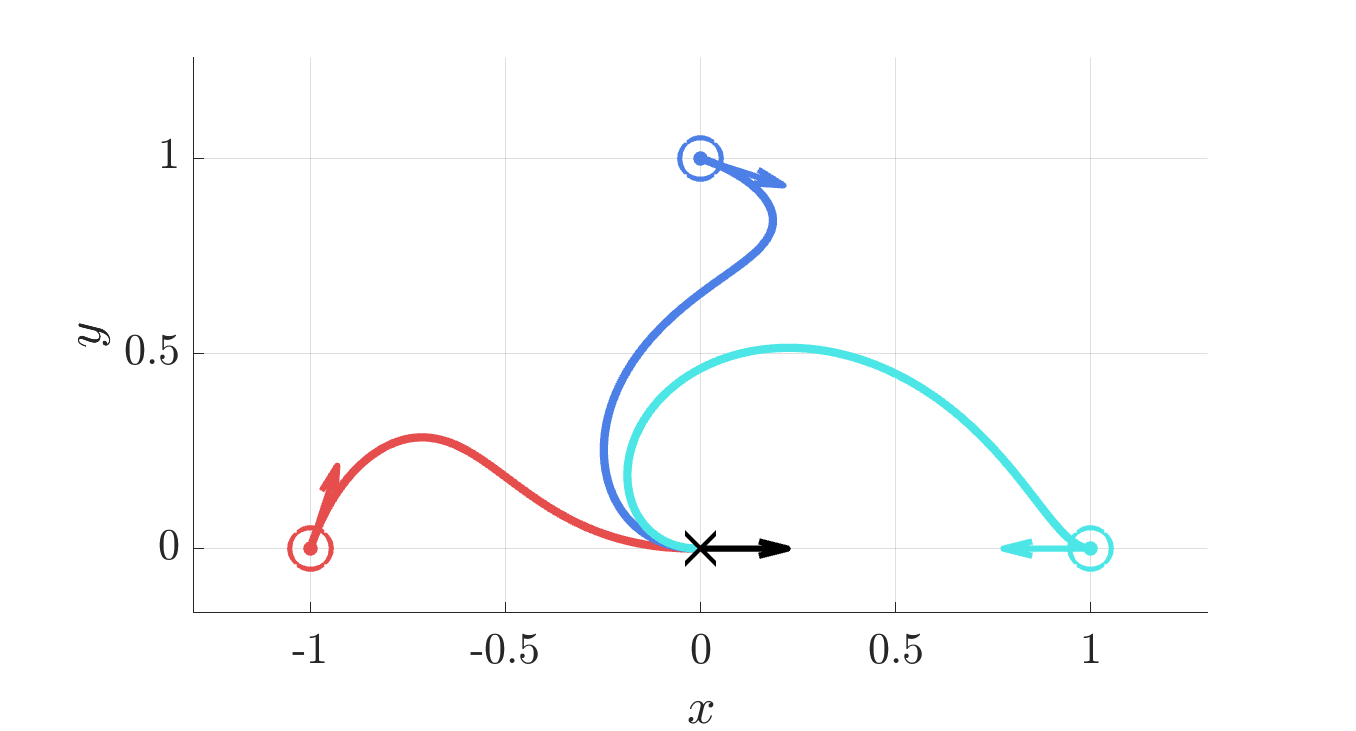}
\caption{System trajectories for the initial conditions $(\rho(0), \delta(0), \gamma(0)) = (1, 0,-\pi/2.5)$ (red), $(1,-\pi/2,-\pi/2.5)$ (blue) and $(1, \pi, 0)$ (cyan).}
\vspace{0.5em}
\label{fig:trajectory_thrm1}
\end{subfigure}
\begin{subfigure}[b]{\linewidth}
\centering
\includegraphics[width=0.8\linewidth]{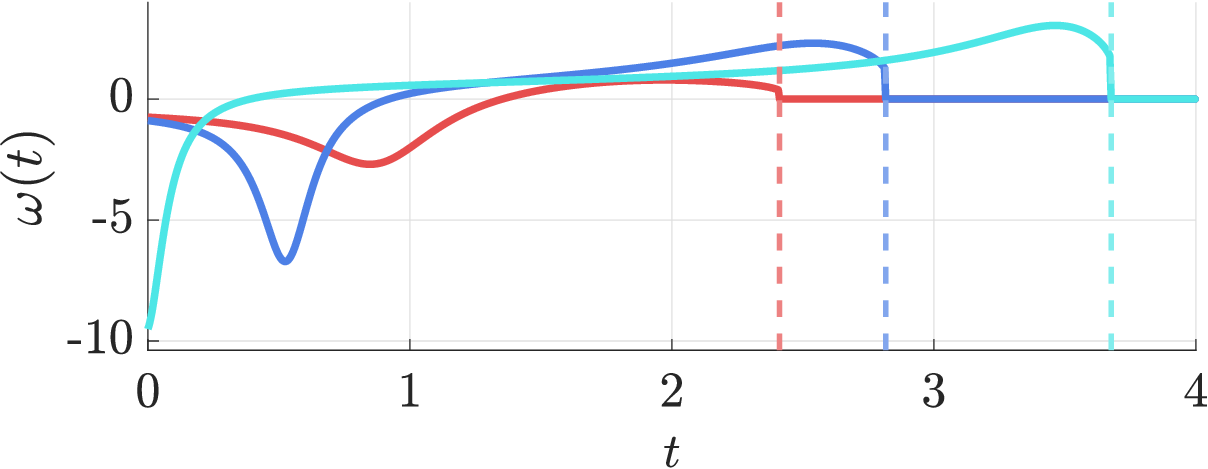}
\caption{Steering input $\omega(t)$ with cutoff ($v(t) = \omega(t) = 0$) applied when $\rho(t) \leq 0.01$. The time at which the cutoff condition is first met is indicated by the dashed vertical line.}
\vspace{0.5em}
\label{fig:control_thrm1}
\end{subfigure}
\begin{subfigure}[b]{\linewidth}
\centering
\includegraphics[width=.9\linewidth]{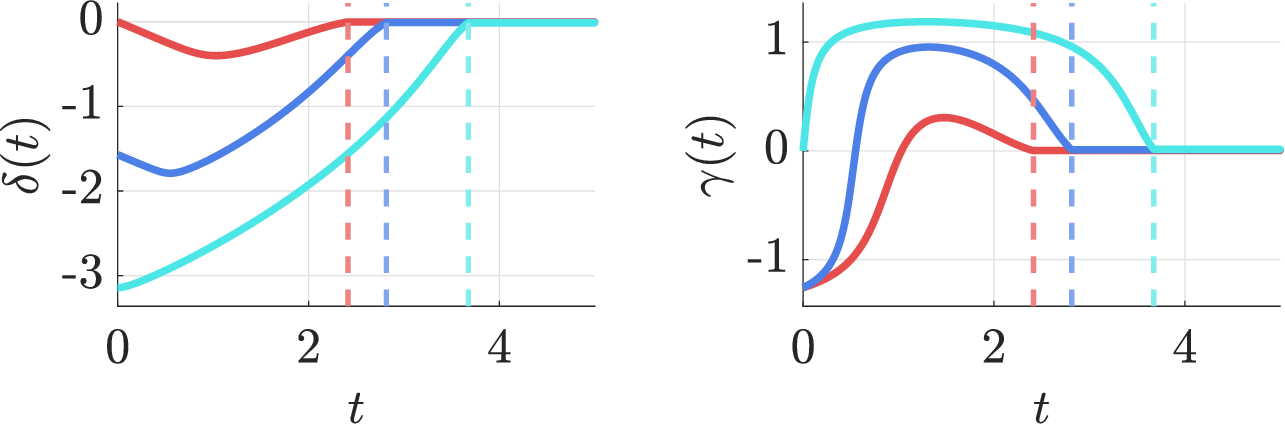}
\caption{Plots of the polar angle $\delta$ and the line-of-sight angle $\gamma$ versus time indicating the angular states reach zero before the cutoff.}
\label{fig:polar_thrm1}
\end{subfigure}
\caption{Simulation with the steering control law~\eqref{eq2-control-omega} with $c_1 = 1.01$ and $c_2 = 5$ and $v = 0.5$.}
\label{fig:sim_thrm1}
\end{figure}

\begin{theorem}
\label{thm:Dubins-FT-stabilize2}
For $v=\mbox{\rm const}>0$, consider \eqref{eq:unicycle_polar} with the controller
\begin{subequations}
\label{eq2-control-omega}
\begin{eqnarray}
\label{eq-third-controller1}
\omega &=& \frac{v}{\rho} \left(\sin\gamma +\cos^3\gamma \, \bar\omega\right)
\\
\label{eq2-control-omega2}
\bar\omega &=& \delta +\frac{c_1(\tan\gamma+\delta) +c_2\zeta}{\rho}
\\
\label{eq2-control-omega3}
\zeta &=& \tan\gamma+\frac{c_1}{\rho}\delta\,
\end{eqnarray}
\end{subequations}
and $c_1,c_2 \geq \underline c :=\min\{c_1,c_2\} > 0$. There exist 
constants $\beta_1\geq \beta_2 >0$ and $N_2(\rho_0)\geq N_1(\rho_0) >0$, which also depend on $c_1,c_2$, 
such that for all $\rho_0>0$, $\delta_0\in\mathbb{R}$ and $\gamma_0\in(-\pi/2,\pi/2)$ the following holds:
\begin{equation}
\label{eq2-rho-bound}
\rho(t)\leq \rho_0 (1-t/t_1)
\end{equation}
\begin{equation}
\label{eq2-deltan-bound}
B^2(t) \leq 
N_1 {\rm e}^{-\beta_1/({1-t/t_1})}
B^2_0
\end{equation}
\begin{equation}
\label{eq2-omega(t)-bound}
|\omega(t)| \leq v \, 
N_2 {\rm e}^{-\beta_2/({1-t/t_1})}B_0
\end{equation}
for all $t \in [0, \min\{t_1,T\})$, where
\begin{equation}
\label{eq-t1N1beta1}
t_1(\rho_0,\delta_0, \gamma_0, v,c_1,c_2) =\frac{\rho_0}{v}{\sqrt{1+N_1 {\rm e}^{-\beta_1}B^2_0}}\,
\end{equation}
and $T$ is the interval of existence of the system's solutions, at which $\rho(T)=0$, and $B_0 = B(\delta_0,\gamma_0)$, $B(\delta,\gamma) \coloneqq \sqrt{\delta^2 + \tan^2\gamma}$.
\end{theorem}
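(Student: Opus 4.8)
\emph{Proof plan.} The plan is to follow the backstepping construction in the proof of Theorem~\ref{thm:Dubins-FT-stabilize}, but with a \emph{distance-dependent} coordinate change, and with the first bullet of Lemma~\ref{lem1} replaced by the second. Working directly in $\rho$ is cleanest: from \eqref{eq:unicycle_polardrho} and the structure $\omega=\tfrac{v}{\rho}(\sin\gamma+\cos^3\gamma\,\bar\omega)$ of \eqref{eq-third-controller1} one gets the identity $\tfrac{d}{d\rho}\tan\gamma=\bar\omega/\rho$, so with $u:=\tan\gamma$ the angular subsystem reads $\tfrac{d\delta}{d\rho}=-u/\rho$, $\tfrac{du}{d\rho}=\bar\omega/\rho$. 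Substituting the backstepping variable $\zeta=u+\tfrac{c_1}{\rho}\delta$ of \eqref{eq2-control-omega3} and the input \eqref{eq2-control-omega2}, the extra term $c_1\tfrac{d}{d\rho}(\delta/\rho)=-c_1(u+\delta)/\rho^2$ produced by the $\rho$-dependence of the transformation is exactly what the $\tfrac{c_1(\tan\gamma+\delta)}{\rho}$ piece of $\bar\omega$ cancels, leaving the clean closed loop $\tfrac{d\delta}{d\rho}=-\tfrac{\zeta}{\rho}+\tfrac{c_1}{\rho^2}\delta$, $\tfrac{d\zeta}{d\rho}=\tfrac{\delta}{\rho}+\tfrac{c_2}{\rho^2}\zeta$.

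Next I would take $V=\delta^2+\zeta^2$; the $\delta\zeta/\rho$ cross terms cancel and $\tfrac{dV}{d\rho}=\tfrac{2}{\rho^2}(c_1\delta^2+c_2\zeta^2)\ge\tfrac{2\underline c}{\rho^2}V$, so the second bullet of Lemma~\ref{lem1} gives $V(\rho)\le V(\rho_0)\,\mathrm e^{2\underline c(1/\rho_0-1/\rho)}$. To return to $B^2=\delta^2+\tan^2\gamma$, note $\delta^2+\zeta^2=\begin{bmatrix}\delta&\tan\gamma\end{bmatrix}P(c_1/\rho)\begin{bmatrix}\delta&\tan\gamma\end{bmatrix}^{\!\top}$ with the same $P(\cdot)$ as in Theorem~\ref{thm:Dubins-FT-stabilize}; since $\det P\equiv1$ we have $\lambda_{\min}(P(s))=1/M(s)$, hence $B^2(\rho)\le M(c_1/\rho)V(\rho)\le M(c_1/\rho)M(c_1/\rho_0)\,\mathrm e^{2\underline c/\rho_0}\mathrm e^{-2\underline c/\rho}B_0^2$. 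Because $M(c_1/\rho)=O(\rho^{-2})$ while $\mathrm e^{-2\underline c/\rho}$ decays faster than any power, this bound stays finite (indeed $\to0$) as $\rho\downarrow0$; in particular $\tan^2\gamma$ is bounded, and a standard continuation/bootstrap argument (on any maximal solution the estimate forces $|\gamma|$ bounded away from $\pi/2$, so that solution is global) gives $\cos\gamma(t)>0$ throughout, $\rho$ monotonically decreasing, and existence up to the finite time $T$ at which $\rho(T)=0$.

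I would then invoke Lemma~\ref{lem2}: splitting $\mathrm e^{-2\underline c/\rho}=\mathrm e^{-\underline c/\rho}\mathrm e^{-\underline c/\rho}$ and using one factor to absorb $M(c_1/\rho)$ into a constant $K=K(c_1,\underline c)$ yields $\tan^2\gamma\le\alpha(\rho/\rho_0)$ with $\alpha(s):=K\,M(c_1/\rho_0)\mathrm e^{2\underline c/\rho_0}B_0^2\,\mathrm e^{-\underline c/(\rho_0 s)}$, a class-$\mathcal K$ function on $[0,1]$ with $\alpha(1)=N_1\mathrm e^{-\beta_1}B_0^2$ for $\beta_1:=\underline c/\rho_0$ and $N_1:=K\,M(c_1/\rho_0)\mathrm e^{2\underline c/\rho_0}$; Lemma~\ref{lem2} then gives \eqref{eq2-rho-bound} with exactly the stated $t_1$. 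Combining $\rho(t)\le\rho_0(1-t/t_1)$, i.e.\ $1/\rho(t)\ge[\rho_0(1-t/t_1)]^{-1}$, with the $B^2$-bound produces \eqref{eq2-deltan-bound}. For \eqref{eq2-omega(t)-bound} I would estimate $|\omega\rho|\le v(|\tan\gamma|+|\bar\omega|)$ from \eqref{eq-third-controller1}, expand $|\bar\omega|$ into $|\delta|,|\tan\gamma|,|\zeta|$ dominated by $B(\rho)$ times powers $1,\rho^{-1},\rho^{-2}$, and again absorb the resulting polynomial in $\rho^{-1}$ (together with a $\sqrt{M(c_1/\rho)}$) into part of $\mathrm e^{-\underline c/\rho}$, bounding the remainder by $\mathrm e^{-\underline c/(2\rho_0(1-t/t_1))}$; this gives \eqref{eq2-omega(t)-bound} with $\beta_2:=\underline c/(2\rho_0)\le\beta_1$ and a constant $N_2$ (enlarged if needed so that $N_2\ge N_1$).

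The main obstacle is the middle step: the $c_1/\rho$ inside the backstepping transformation makes $(\delta,\tan\gamma)\mapsto(\delta,\zeta)$ increasingly ill-conditioned as $\rho\to0$ ($\operatorname{cond}P(c_1/\rho)=M^2(c_1/\rho)\to\infty$), so one must make quantitative the claim that the exponential-in-$1/\rho$ decay delivered by the $1/\rho^2$-order gain (via the second bullet of Lemma~\ref{lem1}) overpowers this polynomial blow-up, and then repackage $\mathrm e^{-c/\rho}$ as $\mathrm e^{-\beta/(1-t/t_1)}$ — which, since only the upper estimate $\rho(t)\le\rho_0(1-t/t_1)$ is available, is what forces the rates $\beta_1,\beta_2$ and prefactors $N_1,N_2$ to carry the $\rho_0$-dependence recorded in the statement. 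The remaining pieces — the cross-term cancellation, the bootstrap for $\cos\gamma>0$, and the explicit class-$\mathcal K$ construction for Lemma~\ref{lem2} — are routine given the template of Theorem~\ref{thm:Dubins-FT-stabilize}.
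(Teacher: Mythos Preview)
The paper actually omits the proof of Theorem~\ref{thm:Dubins-FT-stabilize2} (``All remaining proofs are omitted for brevity and will appear in an extended journal version''), so there is no line-by-line comparison to make. That said, the paper does telegraph its intended argument: Lemma~\ref{lem1} is stated with a second bullet ($dV/d\rho\ge aV/\rho^2$) that is used nowhere else, and Section~V explicitly records the model \eqref{eq-model1rho} with $1/\rho$ as the time-advancing variable and says the design in Theorem~\ref{thm:Dubins-FT-stabilize2} follows from it.

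Your proposal is exactly that intended argument, carried out correctly. The key computations check: with $\omega$ as in \eqref{eq-third-controller1} one indeed gets $d(\tan\gamma)/d\rho=\bar\omega/\rho$; the $\rho$-dependent backstepping variable $\zeta=\tan\gamma+\tfrac{c_1}{\rho}\delta$ produces the extra $-c_1(u+\delta)/\rho^2$ that the $c_1(\tan\gamma+\delta)/\rho$ term in \eqref{eq2-control-omega2} cancels; the $\delta\zeta/\rho$ cross terms in $dV/d\rho$ cancel, leaving $dV/d\rho=\tfrac{2}{\rho^2}(c_1\delta^2+c_2\zeta^2)\ge\tfrac{2\underline c}{\rho^2}V$; and $\det P(s)\equiv 1$ gives $\lambda_{\min}(P(c_1/\rho))=1/M(c_1/\rho)$, so the ill-conditioning of the $\rho$-dependent change of variables is only polynomial in $1/\rho$ and is dominated by the $\mathrm e^{-2\underline c/\rho}$ decay. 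Your handling of the $\rho_0$-dependence of $N_1,N_2,\beta_1,\beta_2$ (forced because only the one-sided estimate $\rho\le\rho_0(1-t/t_1)$ is available when rewriting $\mathrm e^{-c/\rho}$ as $\mathrm e^{-\beta/(1-t/t_1)}$) is also the right diagnosis of why the statement records $N_i(\rho_0)$. In short: your plan is correct and is the natural extension of the Theorem~\ref{thm:Dubins-FT-stabilize} template that the paper's scaffolding (the unused second bullet of Lemma~\ref{lem1} and the model \eqref{eq-model1rho}) points to.
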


All remaining proofs are omitted for brevity and will appear in an extended journal version.

The simulation of the control law~\eqref{eq2-control-omega} applied to~\eqref{eq:unicycle_polar} is presented in Fig.~\ref{fig:sim_thrm2}. Compared to Fig.~\ref{fig:sim_thrm1}, a key difference is evident in the control input shown in Fig.~\ref{fig:control_thrm2}: the control law~\eqref{eq2-control-omega} naturally converges to zero before reaching the cutoff point, allowing the system to approach the origin smoothly. As seen in Fig.~\ref{fig:polar_thrm2}, both $\delta$ and $\gamma$ reach zero before the cutoff, eliminating the need for abrupt steering adjustments near the origin.

\begin{figure}[t]
\centering
\begin{subfigure}{\linewidth}
\centering
\includegraphics[width=0.8\linewidth]{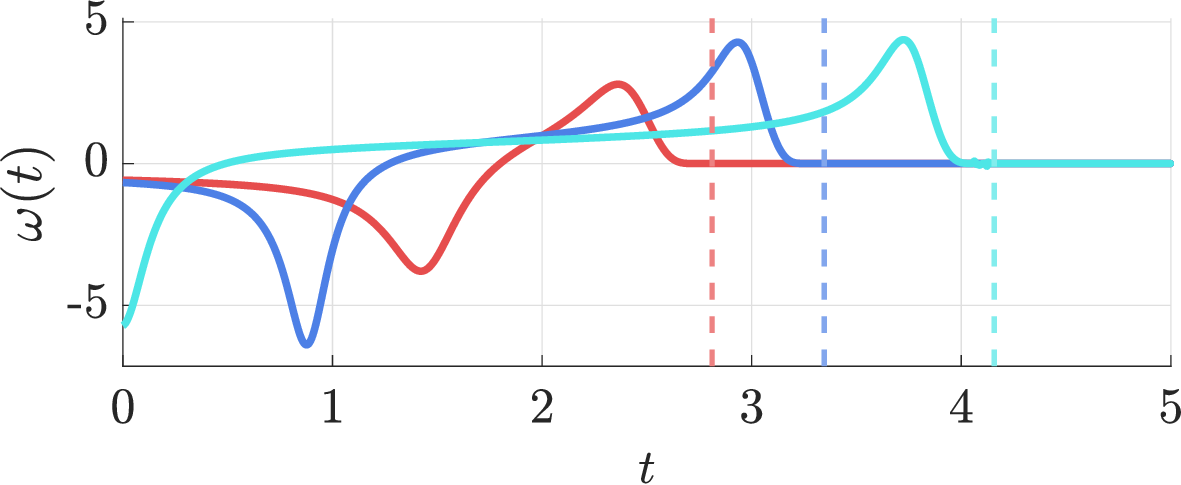}
\caption{Steering input $\omega(t)$ with cutoff ($v(t) = \omega(t) = 0$) applied when $\rho(t) \leq 0.025$. The time at which the cutoff condition is first met is indicated by the dashed vertical line.}
\vspace{0.5em}
\label{fig:control_thrm2}
\end{subfigure}
\begin{subfigure}{\linewidth}
\centering
\includegraphics[width=.9\linewidth]{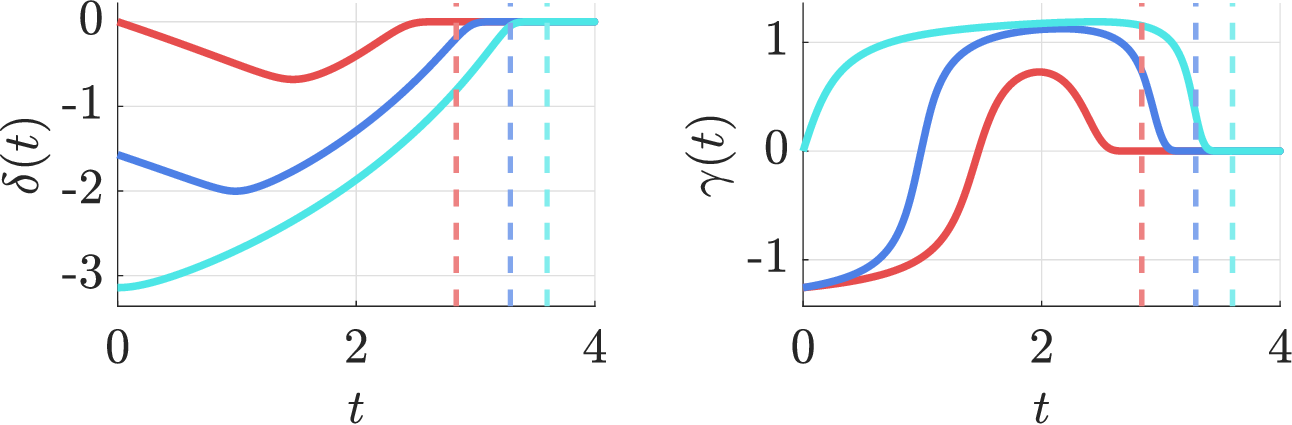}
\caption{Plots of the polar angle $\delta(t)$ and the line-of-sight angle $\gamma(t)$ indicating the angular states reach zero before the cutoff.}
\label{fig:polar_thrm2}
\end{subfigure}
\caption{Simulation with the steering control law~\eqref{eq2-control-omega} with $c_1 = c_2 = 1.2$ and $v = 0.5$. The trajectory in the $xy$-plane is nearly identical to that shown in Fig.~\ref{fig:trajectory_thrm1}, differing only in minor details, and is therefore omitted.}
\label{fig:sim_thrm2}
\end{figure}

\section{Extensions with Safety Considerations}

Beyond basic deadbeat parking, safety often plays a critical role in determining the success of the control objective. In this section, we address three safety considerations: (i) parking without crossing in front of the target, (ii) reducing speed as the system approaches the target, and (iii) parking while avoiding curb violations.

\subsection{Parking without crossing in front of the target}

Vehicle maneuvers must account not only for reaching a destination but also for how the approach is executed. For instance, in combat scenarios, passing directly in front of a target exposes the vehicle to detection, interception, or physical collision risks. Motivated by this, Theorem~\ref{thm:Dubins-FT-stabilize3} introduces a control algorithm that guarantees parking without crossing in front of the target. Its effectiveness is illustrated in Fig.~\ref{fig:thrm2vs3}, where the proposed law successfully prevents crossing the target line, in comparison to the control law~\eqref{eq2-control-omega}.

\begin{theorem}
\label{thm:Dubins-FT-stabilize3}
For $v=\mbox{\rm const}>0$, consider \eqref{eq:unicycle_polar}
with the control law \eqref{eq-third-controller1} and 
\begin{subequations}
\label{eq-third-controller}
\begin{eqnarray}
\label{eq-third-controller2}
\bar\omega &=& \left(1+\tan^2\frac{\delta}{2}\right)2\tan\frac{\delta}{2} \nonumber\\
&& +\frac{c_1(\cos\delta\tan\gamma+\sin\delta) +c_2\zeta}{\rho}
\\
\label{eq-third-controller3}
\zeta &=& \tan\gamma+\frac{c_1}{\rho}\sin\delta\,
\end{eqnarray}
\end{subequations}
with $c_1,c_2 \geq \underline c :=\min\{c_1,c_2\} > 0$. There exist 
constants $\beta_1\geq \beta_2 >0$ and $N_2(\rho_0)\geq N_1(\rho_0) >0$, which also depend on $c_1,c_2$, 
such that for all $\rho_0>0$, $\delta_0\in(-\pi,\pi)$ and $\gamma_0\in(-\pi/2,\pi/2)$ the following holds:
\begin{equation}
\label{eq2-rho-bound3}
\rho(t)\leq \rho_0(1-t/t_1)
\end{equation}
\begin{align}
\label{eq2-deltan-bound3}
B^2(t) \leq&\; 
N_1 {\rm e}^{-\beta_1/({1-t/t_1})}B^2_0
\end{align}
\begin{align}
\label{eq2-omega(t)-bound3}
|\omega(t)| \leq v 
N_2 {\rm e}^{-\beta_2/({1-t/t_1})}
(1+B_0^2) B_0 
\end{align}
for all $t\in\left[0, \min\left\{t_1,T\right\}\right)$, where 
\begin{equation}
t_1(\rho_0,\delta_0, \gamma_0, v,c_1,c_2) =\frac{\rho_0}{v}{\sqrt{1+N_1 {\rm e}^{-\beta_1}B_0^2}}\,
\end{equation}
$t_1$ is defined in \eqref{eq-t1N1beta1}, 
$T$ is the interval of existence of the system's solutions, at which $\rho(T)=0$, and $B_0=B(\delta_0,\gamma_0), B(\delta,\gamma) := \sqrt{4\tan^2\dfrac{\delta}{2} +\tan^2\gamma}$. 
\end{theorem}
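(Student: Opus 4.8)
The plan is to reproduce, in the $\rho$-domain \eqref{eq:unicycle_polardrho}, the backstepping argument behind Theorems~\ref{thm:Dubins-FT-stabilize} and~\ref{thm:Dubins-FT-stabilize2}, but with the ``barrier'' coordinate $2\tan(\delta/2)$ replacing $\delta$ as the output to be regulated. The point is that $\tan(\delta/2)\to\pm\infty$ as $\delta\to\pm\pi$, so once a Lyapunov function built from it is shown to be bounded, $\delta(t)$ can never reach $\pm\pi$ --- which is precisely the guarantee that the vehicle does not pass in front of the target. The controller \eqref{eq-third-controller1} together with \eqref{eq-third-controller} is arranged so that this goes through, each of its pieces --- the $\sin\gamma+\cos^3\gamma\,\bar\omega$ form of $\omega$, the $\rho^{-1}$ and $\rho^{-2}$ gains, and the cubic leading term $(1+\tan^2(\delta/2))2\tan(\delta/2)$ of $\bar\omega$ --- playing a specific role.

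First I would record the identities the controller produces. Substituting $\omega=\frac{v}{\rho}(\sin\gamma+\cos^3\gamma\,\bar\omega)$ into \eqref{eq:unicycle_polar-gammadrho} collapses the line-of-sight channel to $\frac{d}{d\rho}\tan\gamma=\bar\omega/\rho$, while \eqref{eq:unicycle_polar-deltadrho} gives $\frac{d}{d\rho}\sin\delta=-\frac{\cos\delta\tan\gamma}{\rho}$ and $\frac{d}{d\rho}\tan(\delta/2)=-\frac{(1+\tan^2(\delta/2))\tan\gamma}{2\rho}$. With $\zeta$ as in \eqref{eq-third-controller3} one gets $\frac{d\zeta}{d\rho}=\frac{\bar\omega}{\rho}-\frac{c_1(\cos\delta\tan\gamma+\sin\delta)}{\rho^2}$, and inserting $\bar\omega$ from \eqref{eq-third-controller2} cancels the $c_1$-term, leaving $\frac{d\zeta}{d\rho}=\frac{2\tan(\delta/2)(1+\tan^2(\delta/2))}{\rho}+\frac{c_2\zeta}{\rho^2}$. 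Then, taking $V=4\tan^2(\delta/2)+\zeta^2$ and using $\tan\gamma=\zeta-\frac{c_1}{\rho}\sin\delta$ with $\sin\delta=2\tan(\delta/2)/(1+\tan^2(\delta/2))$, the $1/\rho$ cross-terms in $\frac{d}{d\rho}(4\tan^2(\delta/2))$ and in $\frac{d}{d\rho}\zeta^2$ cancel identically --- the purpose of the cubic term and of the distance-dependent gains --- so that $\frac{dV}{d\rho}=\frac{2}{\rho^2}(c_1\cdot 4\tan^2(\delta/2)+c_2\zeta^2)\ge\frac{2\underline c}{\rho^2}V$. The second bullet of Lemma~\ref{lem1} with $a=2\underline c$ then yields $V(\rho)\le V(\rho_0)\,{\rm e}^{2\underline c(1/\rho_0-1/\rho)}$.

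Everything else follows from this inequality. Since the exponent is nonpositive for $\rho\le\rho_0$, $V(\rho)\le V(\rho_0)<\infty$; hence $4\tan^2(\delta/2)\le V(\rho_0)$ keeps $\delta(t)$ strictly inside $(-\pi,\pi)$ (the no-crossing property) with $\delta(t)\to0$, and $\tan^2\gamma\le(1+c_1/\rho)^2V(\rho)$ stays finite, so a continuation argument as in the proof of Theorem~\ref{thm:Dubins-FT-stabilize} shows $\gamma(t)$ never exits $(-\pi/2,\pi/2)$ --- which is what legitimizes the $\rho$-domain reparametrization in the first place --- and $\tan^2\gamma(t)\to0$. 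Dominating $\tan^2\gamma(\rho)$ by a suitable $\alpha\in\mathcal{K}$ of $\rho/\rho_0$ and invoking Lemma~\ref{lem2} gives \eqref{eq2-rho-bound3} with $t_1$ as in \eqref{eq-t1N1beta1}; the value $\alpha(1)=N_1{\rm e}^{-\beta_1}B_0^2$ is what pins down the constant. Converting the $V$-bound to a time bound through $1/\rho(t)\ge 1/(\rho_0(1-t/t_1))$ and absorbing the $(1+c_1/\rho)^2$ prefactor into part of ${\rm e}^{-\underline c/\rho}$ yields \eqref{eq2-deltan-bound3}. For \eqref{eq2-omega(t)-bound3} I would bound $|\omega|\le\frac{v}{\rho}(|\tan\gamma|+|\bar\omega|)$ term by term, using $2|\tan(\delta/2)|,\,|\sin\delta|,\,|\zeta|\le\sqrt V$, $1+\tan^2(\delta/2)\le 1+V/4$, and $|\tan\gamma|\le(1+c_1/\rho)\sqrt V$, which produces a degree-three polynomial in $1/\rho$ times $(1+V/4)\sqrt V$; substituting $V\le V(\rho_0)\le(2+2c_1^2/\rho_0^2)B_0^2$ and $\sqrt V\le\sqrt{V(\rho_0)}\,{\rm e}^{\underline c/\rho_0}{\rm e}^{-\underline c/\rho}$ and again absorbing the polynomial gives the stated form, the $(1+B_0^2)B_0$ factor originating precisely from $(1+V/4)\sqrt V$, with, e.g., $\beta_1=2\beta_2>0$ and $N_2\ge N_1$.

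The main obstacle is the cancellation step: arranging all the $1/\rho$ and $1/\rho^2$ contributions so that $\frac{dV}{d\rho}\ge\frac{2\underline c}{\rho^2}V$ holds exactly, now with the nonstandard distance-dependent gains and the $\tan(\delta/2)$ output, in which essentially all of the structure of \eqref{eq-third-controller} is consumed and where an algebra slip would be fatal. The secondary difficulties are the continuation argument guaranteeing $\gamma$ stays in $(-\pi/2,\pi/2)$ --- without which the whole $\rho$-domain analysis is unjustified --- and the bookkeeping needed to exhibit a $\mathcal{K}$-function $\alpha$ with exactly the value at $1$ that reproduces the claimed $t_1$ and the constants $N_1,N_2,\beta_1,\beta_2$.
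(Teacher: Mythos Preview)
Your proposal is correct and follows precisely the route the paper indicates: the paper omits the proof of this theorem but in Section~V explicitly points to the model \eqref{eq-modelrhotandelta} with $2\tan(\delta/2)$ as output, and your argument is the natural execution of that hint --- the same backstepping Lyapunov template as in the proof of Theorem~\ref{thm:Dubins-FT-stabilize}, now with $V=4\tan^2(\delta/2)+\zeta^2$ and the second bullet of Lemma~\ref{lem1} in place of the first, exactly paralleling the passage from Theorem~\ref{thm:Dubins-FT-stabilize} to Theorem~\ref{thm:Dubins-FT-stabilize2}. Your key cancellation computation $\frac{dV}{d\rho}=\frac{2}{\rho^2}(4c_1\tan^2(\delta/2)+c_2\zeta^2)\ge\frac{2\underline c}{\rho^2}V$ checks out line by line, and the remaining steps (continuation of $\gamma$ in $(-\pi/2,\pi/2)$, Lemma~\ref{lem2}, absorption of the polynomial $1/\rho$ factors into the exponential) are the expected bookkeeping.
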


\begin{figure}[t]
\centering
\includegraphics[width=0.7\linewidth]{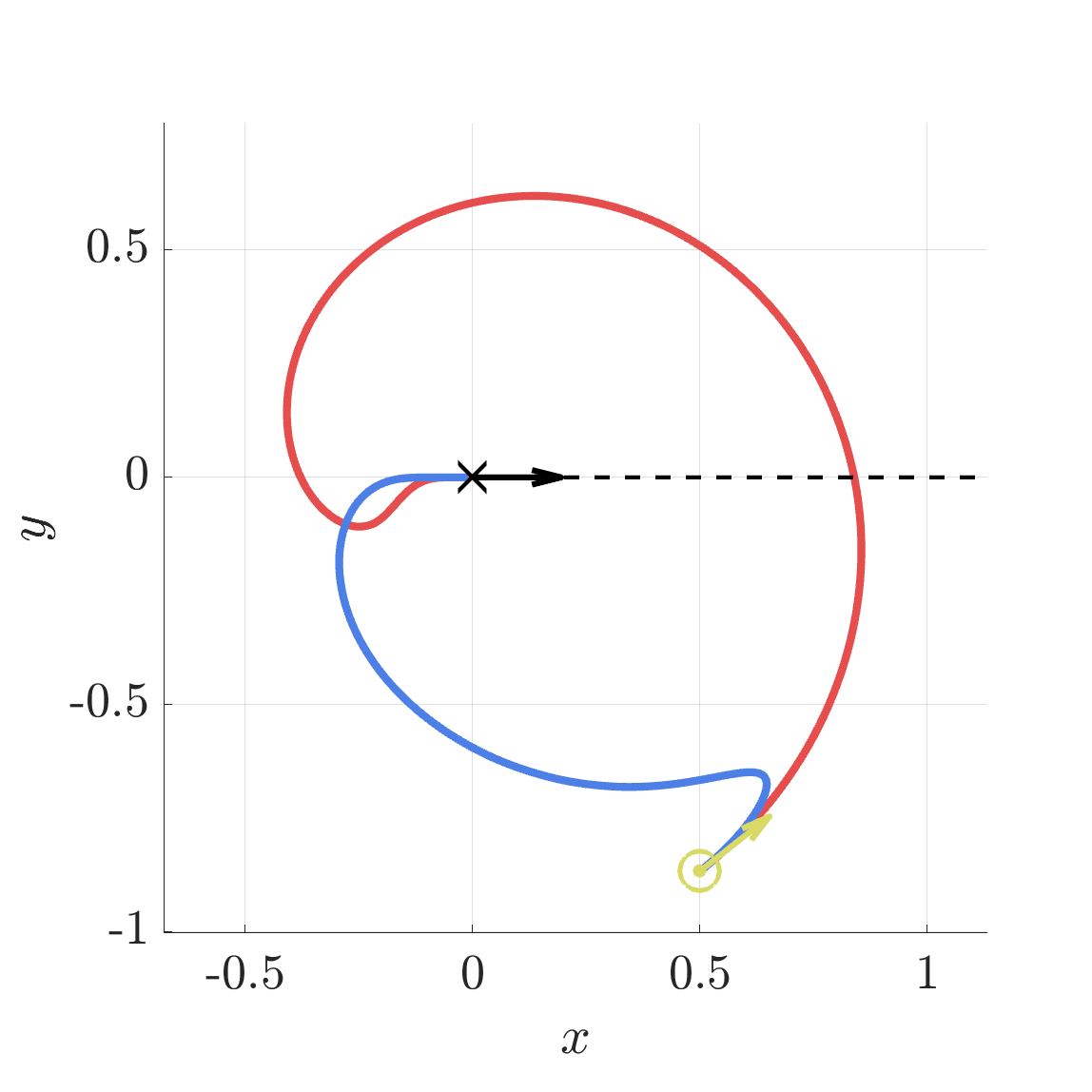}
\caption{Trajectory comparison between the steering control~\eqref{eq2-control-omega} with $v = 0.2$, $c_1 = c_2 = 0.5$ (red) and~\eqref{eq-third-controller} with $v = 0.2$, $c_1 = 1$, $c_2 = 15$ (blue), where the latter successfully avoided crossing in front of the target.}
\label{fig:thrm2vs3}
\end{figure}

\subsection{Decelerating towards the target}

If the vehicle's speed is actuated (i.e., if it is not a guided missile, but a car) then it makes sense to seek control inputs whose forward velocity decreases to zero as it approaches the target. We modify the forward velocity to that of a smooth deceleration towards the target in Theorem~\ref{thm:Dubins-FT-stabilizeslow1} with the accompanying numerical simulation in Fig.~\ref{fig:sim_thrm9} confirming the gradual decay to zero of the forward velocity input.

\begin{theorem}
\label{thm:Dubins-FT-stabilizeslow1}
For the system \eqref{eq:unicycle_polar} with the control algorithm 
\begin{subequations}
\begin{align}
v =& 
{c_0}{\rho^{n/(n+1)}}\\
\omega =& \dfrac{v}{\rho}\bigl\{
\sin\gamma +\cos^2\gamma\bigl[\cos\gamma
(1+c_1c_2)\delta\nonumber\\
&+(c_1+c_2)\sin\gamma
\bigr]\bigr\}\,,
\end{align}
\end{subequations}
where $c_0 > 0$, $c_1,c_2 \geq \underline c  >1/(n+1)$, and $ n\in\mathbb{N}$, there exists $a>0$ such that, 
for all $\rho_0>0$, $\delta_0\in\mathbb{R}$, and $\gamma_0\in(-\pi/2,\pi/2)$, 
\begin{equation}
\label{eq-rho-boundslow1}
\rho(t)\leq \rho_0(1-t/t_1)^{n+1}
\end{equation}
\begin{equation}
B(t) \leq M(c_1)B_0\left( 1- 
t/t_1\right)^{(n+1)\underline c}
\end{equation}
\begin{equation}
\label{eq-omega(t)-boundslow1}
v(t) \leq c_0 \rho_0^{n/(n+1)}(1-t/t_1)^{n}
\end{equation}
\begin{equation}
|\omega(t)| \leq  \frac{c_0a M(c_1) B_0}{\rho_0^{1/(n+1)}} 
\left( 1- t/t_1\right)^{(n+1)\underline c -1}
\end{equation}
where  $B_0 = B(\delta_0,\gamma_0)$, $B(\delta,\gamma) \coloneqq \sqrt{\delta^2+\tan^2\gamma}$, $a=\sqrt{2}(1+\max\{c_1c_2,c_1+c_2\})
M(c_1)$ and 
\begin{equation}
t_1= (n+1)\frac{\rho_0^{1/(n+1)}}{c_0}{\sqrt{1+M^2(c_1)B_0^2}}\,.
\end{equation}
\end{theorem}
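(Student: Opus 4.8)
The plan is to reuse the backstepping analysis of Theorem~\ref{thm:Dubins-FT-stabilize} almost verbatim for the $(\delta,\gamma)$-subsystem, since the steering law is exactly \eqref{eq-control-omega}, and to supply one new ingredient: the time parametrization of $\rho$, which now decays by a nonlinear rather than a linear law.

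First I would apply the transformation $\zeta=\tan\gamma+c_1\delta$ and write $\omega=\frac{v\cos\gamma}{\rho}\left(\tan\gamma+\cos^2\gamma\,\bar\omega\right)$ with $\bar\omega=c_2\zeta+\delta+c_1\tan\gamma$, obtaining $\dot\delta=\frac{v\cos\gamma}{\rho}(-c_1\delta+\zeta)$, $\dot\zeta=-\frac{v\cos\gamma}{\rho}(c_2\zeta+\delta)$, so that $V=\delta^2+\zeta^2$ satisfies $\dot V=-2\frac{v\cos\gamma}{\rho}(c_1\delta^2+c_2\zeta^2)$ exactly as in \eqref{eq-Vdot-closed-loop}. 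The crucial observation is that dividing $\dot V$ by $\dot\rho=-v\cos\gamma$ cancels the factor $v\cos\gamma$ \emph{regardless of the velocity profile}: \eqref{eq:dVdrho} holds unchanged, Lemma~\ref{lem1} gives $V(\rho)\le(\rho/\rho_0)^{2\underline c}V(\rho_0)$, and the $P(c_1)$ quadratic-form bound used in Theorem~\ref{thm:Dubins-FT-stabilize} yields $B^2(t)\le M^2(c_1)(\rho/\rho_0)^{2\underline c}B_0^2$. Since $\rho\le\rho_0$, this forces $\tan^2\gamma\le M^2(c_1)B_0^2$, so $\gamma(t)$ stays in $(-\pi/2,\pi/2)$ and $\cos\gamma(t)\ge(1+M^2(c_1)B_0^2)^{-1/2}>0$; a standard continuity argument closes this bootstrap and legitimizes the division by $\dot\rho$.

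The new step is the $\rho$-bound. With $v=c_0\rho^{n/(n+1)}$ and the lower bound on $\cos\gamma$ just obtained, $\dot\rho=-c_0\rho^{n/(n+1)}\cos\gamma\le-k\,\rho^{n/(n+1)}$, where $k:=c_0/\sqrt{1+M^2(c_1)B_0^2}$. The comparison ODE $\dot w=-kw^{n/(n+1)}$, $w(0)=\rho_0$, has solution $w(t)=\left(\rho_0^{1/(n+1)}-\frac{k}{n+1}t\right)^{n+1}=\rho_0(1-t/t_1)^{n+1}$ with $t_1=(n+1)\rho_0^{1/(n+1)}\sqrt{1+M^2(c_1)B_0^2}/c_0$, so the comparison principle gives \eqref{eq-rho-boundslow1}. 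Feeding $\rho/\rho_0\le(1-t/t_1)^{n+1}$ into the $B$-estimate gives $B(t)\le M(c_1)B_0(1-t/t_1)^{(n+1)\underline c}$, and into $v=c_0\rho^{n/(n+1)}$ gives \eqref{eq-omega(t)-boundslow1}. For $\omega$, the same algebraic estimate on \eqref{eq-control-omega} as in Theorem~\ref{thm:Dubins-FT-stabilize} gives $|\omega\rho|\le v\sqrt2(1+\max\{c_1c_2,c_1+c_2\})B$; dividing by $\rho$, using $v/\rho=c_0\rho^{-1/(n+1)}$ together with the $B$- and $\rho$-bounds, and invoking $\underline c>1/(n+1)$ so that the exponent $(n+1)\underline c-1$ is nonnegative, collapses the $\rho$-powers into $(1-t/t_1)^{(n+1)\underline c-1}$, producing the stated $\omega$ bound with $a=\sqrt2(1+\max\{c_1c_2,c_1+c_2\})M(c_1)$.

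I expect no serious obstacle; the one point needing care is the exponent bookkeeping. I would verify that $\underline c>1/(n+1)$ is exactly what makes $\dot\delta=c_0\rho^{-1/(n+1)}\sin\gamma$ and $\dot\gamma$ bounded, so that the solution extends up to $\rho=0$, and simultaneously makes $\omega(t)$ bounded and decaying; that $n=0$, $c_0=v$ recovers Theorem~\ref{thm:Dubins-FT-stabilize}; and that, since $\dot\rho=-v\cos\gamma\ge-c_0\rho^{n/(n+1)}$ as well, $\rho$ hits $0$ at some $T\in[(n+1)\rho_0^{1/(n+1)}/c_0,\,t_1]$, so all the displayed bounds hold on $[0,\min\{t_1,T\})$.
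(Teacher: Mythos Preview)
The paper omits the proof of this theorem (``All remaining proofs are omitted for brevity and will appear in an extended journal version''), so there is no in-paper proof to compare against. Your approach---reusing the backstepping/Lyapunov analysis of Theorem~\ref{thm:Dubins-FT-stabilize} verbatim for the $(\delta,\gamma)$-subsystem (which is legitimate because dividing $\dot V$ by $\dot\rho=-v\cos\gamma$ cancels the velocity profile entirely and recovers \eqref{eq:dVdrho} unchanged), and then replacing the linear comparison of Lemma~\ref{lem2} by the finite-time comparison $\dot w=-kw^{n/(n+1)}$ with explicit solution $w(t)=\rho_0(1-t/t_1)^{n+1}$---is exactly the natural adaptation and is correct; it is almost certainly what the authors intend. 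One cosmetic remark: your derivation actually yields $|\omega|\le c_0\rho_0^{-1/(n+1)}\sqrt{2}(1+\max\{c_1c_2,c_1+c_2\})M(c_1)B_0(1-t/t_1)^{(n+1)\underline c-1}$, i.e.\ a single factor of $M(c_1)$, whereas the stated bound carries $aM(c_1)$ with $a$ already containing $M(c_1)$; since $M(c_1)\ge1$ the stated bound is simply a bit conservative, not wrong.
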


\begin{figure}[t]
\centering
\includegraphics[width=.95\linewidth]{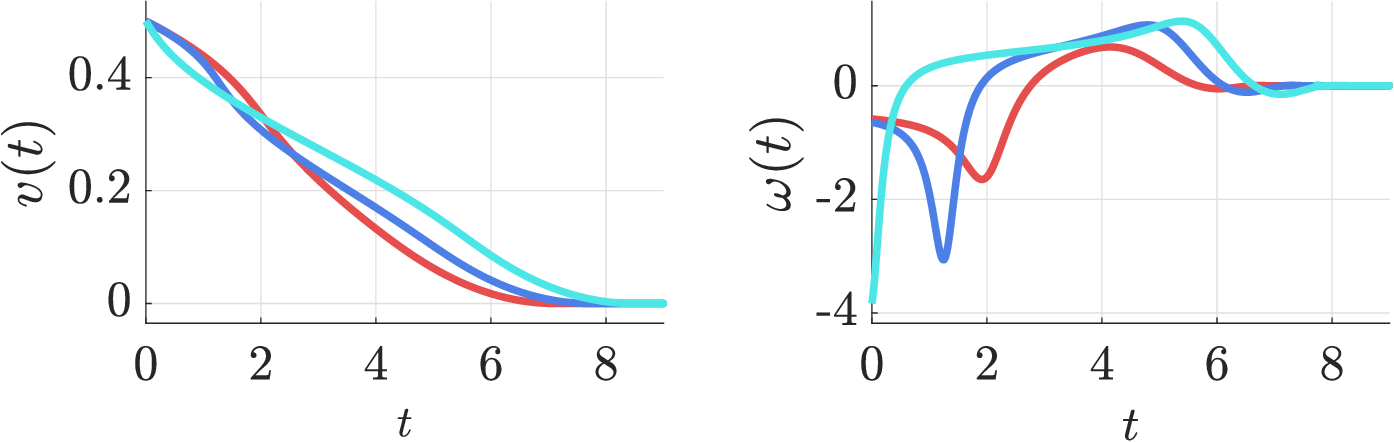}
\caption{Simulation with the control law derived in Theorem~\ref{thm:Dubins-FT-stabilizeslow1} with $c_0 = 0.5$, $c_1 = c_2 = 1.2$, and $n=2$, which makes for a smooth velocity $v(t)$. The difference is seen in $v(t)$ where the forward velocity gradually decayed to zero when approaching the target. The trajectory in the $xy$-plane is nearly identical to that shown in Fig.~\ref{fig:trajectory_thrm1}, differing only in minor details, and is therefore omitted.}
\label{fig:sim_thrm9}
\end{figure}

\subsection{Parking without curb violation}

Vehicle maneuvers are also often constrained by environmental boundaries such as curbs, barriers, or designated parking zones. To address this, in Theorem~\ref{thm:nonov2}, we incorporate boundary-aware safety conditions through the nonovershooting control framework \cite{krstic_nonovershooting_2006} that the vehicle remains strictly on one side of the half-plane with respect to the target. The simulation results in Fig.~\ref{fig:sim_nonov2} show that $\delta(t)$ remains nonnegative for all time, corresponding to $y(t) \leq 0$ for all time.

\begin{theorem}\label{thm:nonov2}
For $v=\mbox{\rm const}>0$, consider \eqref{eq:unicycle_polar}
with the control law  \eqref{eq-third-controller1} and
\begin{subequations}
\label{eq-nonovershoot2}
\begin{eqnarray}
\label{eq-nonovershoot_omegabar2}
\bar\omega &=& \frac{1}{\rho}\biggl[c_1(\sin\delta + \cos\delta\tan\gamma)\nonumber\\
&&+ c_2(1+\rho^2)\left(1+\tan^2\frac{\delta}{2}\right)^2 \zeta\biggr]
\\
\label{eq-nonovershoot_zeta2}
\zeta &=& \tan\gamma + \frac{c_1}{\rho}\sin\delta\,,
\end{eqnarray}
\end{subequations}
with $c_2 > 0$ and
\begin{align}
c_1 &> \max\left\{0,-\frac{\rho_0\tan\gamma_0}{\sin\delta_0}\right\}\label{eq:nonov2_c1}\,.
\end{align}
There exist constants $\beta_1 \geq \beta_2 > 0$ and $N_2(\rho_0) \geq N_1(\rho_0) > 0$, which also depend on $c_1,c_2$, such that for all $\rho_0 >0$, $\delta_0 \in (0,\pi)$ and $\gamma_0 \in (-\pi/2,\pi/2)$ the following holds:
\begin{align}\label{eq:nonov_rho_bound2}
\rho(t) \leq \rho_0(1-t/t_1)
\end{align}
\begin{align}\label{eq:nonov_deltan_bound2}
B^2(t)\leq N_1e^{-\beta_1/(1-t/t_1)}B^2_0
\end{align}
\begin{align}\label{eq:nonov_omega_bound2}
|\omega(t)| \leq vN_2e^{-\beta_2/(1-t/t_1)}\left(1+B_0^4\right)B_0
\end{align}
and $\delta(t) \in [0,\pi)$ for all $t \in [0, \min\{t_1,T\})$, where 
\begin{align}
t_1
= \frac{\rho_0}{v}\sqrt{1+ N_1e^{-\beta_1}B_0^2}
\end{align}
and $T$ is the interval of existence of the system's solutions, at which $\rho(T) = 0$ , and $B_0=B(\delta_0,\gamma_0), B(\delta,\gamma) := \sqrt{\tan^2\dfrac{\delta}{2} +\tan^2\gamma}$. 
\end{theorem}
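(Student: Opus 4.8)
The plan is to run the distance-as-time backstepping of Theorems~\ref{thm:Dubins-FT-stabilize2}--\ref{thm:Dubins-FT-stabilize3}, with the nonovershooting constraint reduced to a sign condition on the backstepping variable. First I would introduce $\mu:=\tan(\delta/2)$ and $w:=\tan\gamma$ (so $B^2=\mu^2+w^2$) and, as in \eqref{eq:unicycle_polardrho}, divide \eqref{eq:unicycle_polar-delta}--\eqref{eq:unicycle_polar-gamma} by $\dot\rho=-v\cos\gamma$, treating $\rho$ as the independent variable on the interval where $\cos\gamma>0$. Using $d\delta/d\rho=-\tan\gamma/\rho$, $d\tan\gamma/d\rho=\bar\omega/\rho$ and $d(\rho^{-1}\sin\delta)/d\rho=-\rho^{-2}(\sin\delta+\cos\delta\tan\gamma)$, with $\zeta$ from \eqref{eq-nonovershoot_zeta2}, a direct computation shows that once \eqref{eq-nonovershoot_omegabar2} is substituted the $c_1$-terms cancel and $\zeta$ obeys the decoupled scalar linear equation
\[
\frac{d\zeta}{d\rho}=\frac{c_2(1+\rho^2)\bigl(1+\tan^2(\delta/2)\bigr)^2}{\rho^2}\,\zeta .
\]
This identity is the heart of the design.

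Two facts follow immediately. Since the coefficient above is strictly positive, $\zeta(\rho)$ never changes sign; because $\sin\delta_0>0$, hypothesis \eqref{eq:nonov2_c1} is exactly $\zeta_0>0$, hence $\zeta(t)>0$ for all $t$, and bounding the coefficient below by $1/\rho^2$ yields $0<\zeta(\rho)\le\zeta_0\,e^{-c_2(1/\rho-1/\rho_0)}$. Moreover, writing $w=\zeta-(c_1/\rho)\sin\delta$, the $\delta$-equation reads $\dot\delta=\tfrac{v\cos\gamma}{\rho}\bigl(\zeta-\tfrac{c_1}{\rho}\sin\delta\bigr)$; on $\delta\in[0,\pi]$ one has $\sin\delta\ge0$, and at $\delta=0$ the right-hand side equals $\tfrac{v\cos\gamma}{\rho}\zeta>0$, so $\delta=0$ is repelling, $\delta(t)$ cannot reach zero, and $\delta(t)>0$ throughout --- the no-crossing guarantee.

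For the quantitative bounds I would use the weighted Lyapunov function $V=\epsilon\mu^2+\zeta^2$. Along trajectories $dV/d\rho=\tfrac{2\epsilon c_1}{\rho^2}\mu^2-\tfrac{\epsilon}{\rho}\mu(1+\mu^2)\zeta+\tfrac{2c_2(1+\rho^2)(1+\mu^2)^2}{\rho^2}\zeta^2$; the indefinite cross term is dominated by Young's inequality, its share $\mu^2/\rho^2$ absorbed by the first term and its share $(1+\mu^2)^2\zeta^2$ by the last, which is possible for every $c_1,c_2>0$ once $\epsilon\le 8c_1c_2$ --- here the factors $(1+\rho^2)$ (supplying the constant $2c_2$ needed when $\rho_0$ is large) and $(1+\tan^2(\delta/2))^2$ in \eqref{eq-nonovershoot_omegabar2} are exactly what is required. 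This gives $dV/d\rho\ge(a/\rho^2)V$ with $a=\min\{c_1,2c_2\}$, so Lemma~\ref{lem1} yields $V(\rho)\le V(\rho_0)e^{a(1/\rho_0-1/\rho)}$. Hence $\mu^2\le V/\epsilon$ is bounded uniformly, so $\delta(t)$ stays bounded away from $\pi$ and $\delta(t)\in[0,\pi)$; and since $w^2\le 2\zeta^2+8c_1^2\mu^2/\rho^2$ with $\mu^2/\rho^2\lesssim e^{-(a/2)/\rho}$, one obtains $B^2(\rho)\le N_1 e^{-\beta_1(1/\rho-1/\rho_0)}B_0^2$ with $N_1,\beta_1$ depending only on $\rho_0,c_1,c_2$ (using $\mu_0^2,\zeta_0^2\lesssim B_0^2$ via \eqref{eq-nonovershoot_zeta2}). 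This keeps $\gamma$ strictly inside $(-\pi/2,\pi/2)$, closing the standard continuation/bootstrap that justified $\cos\gamma>0$. Then $\tan^2\gamma(\rho)=w^2\le B^2(\rho)\le\alpha(\rho/\rho_0)$ for a class-$\mathcal{K}$ function $\alpha$ whose $\alpha(1)$ matches the stated $t_1$, Lemma~\ref{lem2} gives \eqref{eq:nonov_rho_bound2}, and substituting $\rho(t)\le\rho_0(1-t/t_1)$ converts the $\rho$-bounds into \eqref{eq:nonov_deltan_bound2}. Finally, feeding $\rho(t)\le\rho_0(1-t/t_1)$ together with the $B$-, $\mu$- and $\zeta$-bounds into \eqref{eq-third-controller1}, \eqref{eq-nonovershoot_omegabar2}, each $1/\rho^k$ prefactor is beaten by the $e^{-\mathrm{const}/\rho}$ decay while $(1+\tan^2(\delta/2))^2$ is bounded by a constant of order $1+B_0^4$, producing \eqref{eq:nonov_omega_bound2}.

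The step I expect to be most delicate is making the Lyapunov estimate $dV/d\rho\ge(a/\rho^2)V$ close \emph{for all} admissible $c_1,c_2$ and uniformly over the initial angles: one must check that the higher-order gain $c_2(1+\rho^2)(1+\tan^2(\delta/2))^2$ is precisely large enough to swallow the cross term $\mu(1+\mu^2)\zeta$ after the weight $\epsilon$ is chosen, and then track the resulting constants carefully enough that the bound in \eqref{eq:nonov_deltan_bound2} is proportional to $B_0^2$ and the one in \eqref{eq:nonov_omega_bound2} carries exactly the factor $(1+B_0^4)B_0$. The remaining pieces --- positivity of $\zeta$, repellence of $\delta=0$, the Lemma~\ref{lem2} step, and the exponential-over-polynomial argument for $\omega$ --- follow the template of Theorems~\ref{thm:Dubins-FT-stabilize}--\ref{thm:Dubins-FT-stabilize3}.
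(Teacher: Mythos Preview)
The paper does not include a proof of this theorem; after Theorem~\ref{thm:Dubins-FT-stabilize2} it states that all remaining proofs are omitted and deferred to an extended version. So there is nothing in the paper to compare against line by line. That said, your proposal follows precisely the template the paper lays out in Theorems~\ref{thm:Dubins-FT-stabilize}--\ref{thm:Dubins-FT-stabilize3} and Lemmas~\ref{lem1}--\ref{lem2}: transform to the $\rho$-indexed model, introduce the backstepping variable $\zeta$, verify the decoupled $\zeta$-equation, exploit sign invariance of $\zeta$ for the nonovershooting argument, build a weighted Lyapunov function $V=\epsilon\mu^2+\zeta^2$, close $dV/d\rho\ge (a/\rho^2)V$ with Young's inequality, and then feed the result through Lemma~\ref{lem2} to convert $\rho$-bounds into $t$-bounds.

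Your key computations check out. In particular: $d\tan\gamma/d\rho=\bar\omega/\rho$ under \eqref{eq-third-controller1}; the $c_1$-cancellation in $d\zeta/d\rho$ is correct, giving exactly the decoupled equation you state; condition \eqref{eq:nonov2_c1} is equivalent to $\zeta_0>0$ since $\sin\delta_0>0$; and at $\delta=0$, $\dot\delta=(v\cos\gamma/\rho)\zeta>0$ gives the repellence that keeps $\delta(t)\ge 0$. The Lyapunov calculation is right: with $\sin\delta=2\mu/(1+\mu^2)$ one gets $d\mu^2/d\rho=2c_1\mu^2/\rho^2-\mu(1+\mu^2)\zeta/\rho$, and the choice $\epsilon\le 8c_1c_2$ is exactly what makes the Young split absorb the cross term against the $(1+\rho^2)(1+\mu^2)^2$ gain in the $\zeta^2$ term---this is where the extra factors in \eqref{eq-nonovershoot_omegabar2}, relative to \eqref{eq-third-controller2}, earn their keep, and you identify that correctly. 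The only cosmetic point is that the constant $a$ comes out as $\min\{c_1,c_2\}$ rather than $\min\{c_1,2c_2\}$ depending on how tightly you do the split, but this is immaterial for the existence claim on $\beta_1,N_1$. The passage from the $V$-bound to a $B^2$-bound via $w^2\le 2\zeta^2+2c_1^2\sin^2\delta/\rho^2$ and the exponential-beats-polynomial step for \eqref{eq:nonov_omega_bound2} are standard and consistent with the paper's treatment in Theorem~\ref{thm:Dubins-FT-stabilize}.
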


\begin{figure}[t]
\centering
\begin{subfigure}{\linewidth}
\centering
\includegraphics[width=.9\linewidth]{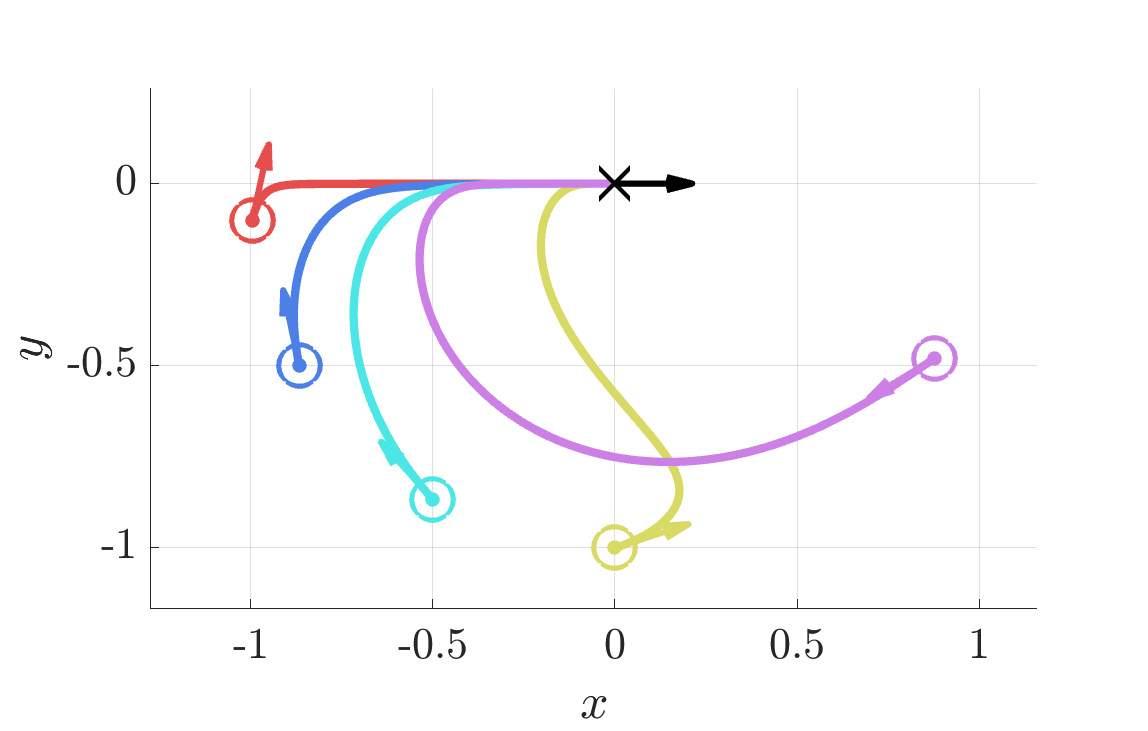}
\caption{The trajectories clearly demonstrate that the system remains in the lower half of the $xy$-plane.}
\vspace{0.5em}
\label{fig:trajectory_nonov2}
\end{subfigure}
\begin{subfigure}{\linewidth}
\centering
\includegraphics[width=0.85\linewidth]{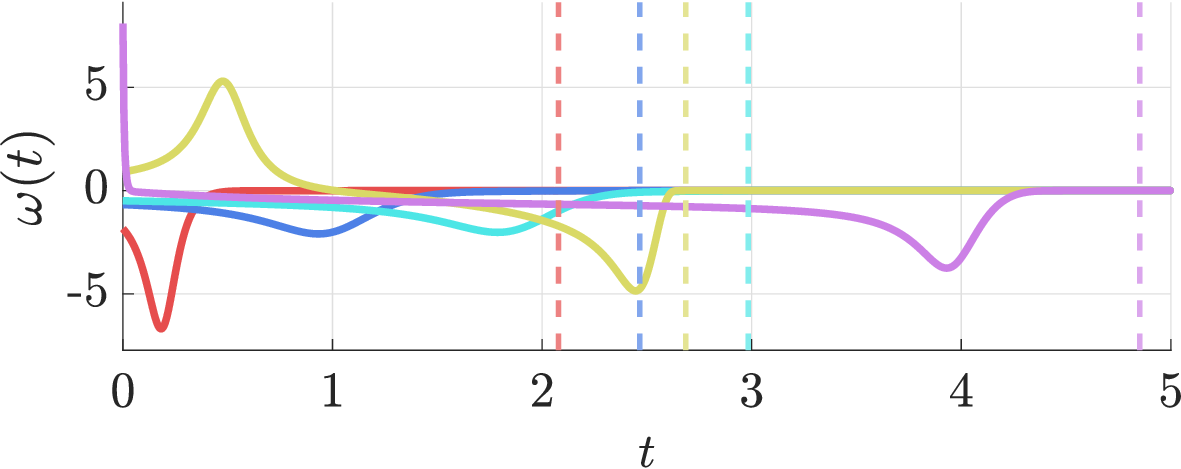}
\caption{Steering input $\omega(t)$ with cutoff ($v(t) = \omega(t) = 0$) applied when $\rho(t) \leq 0.001$. The time at which the cutoff condition is first met is indicated by the dashed vertical line.}
\vspace{0.5em}
\label{fig:control_nonov2}
\end{subfigure}
\begin{subfigure}{\linewidth}
\centering
\includegraphics[width=.9\linewidth]{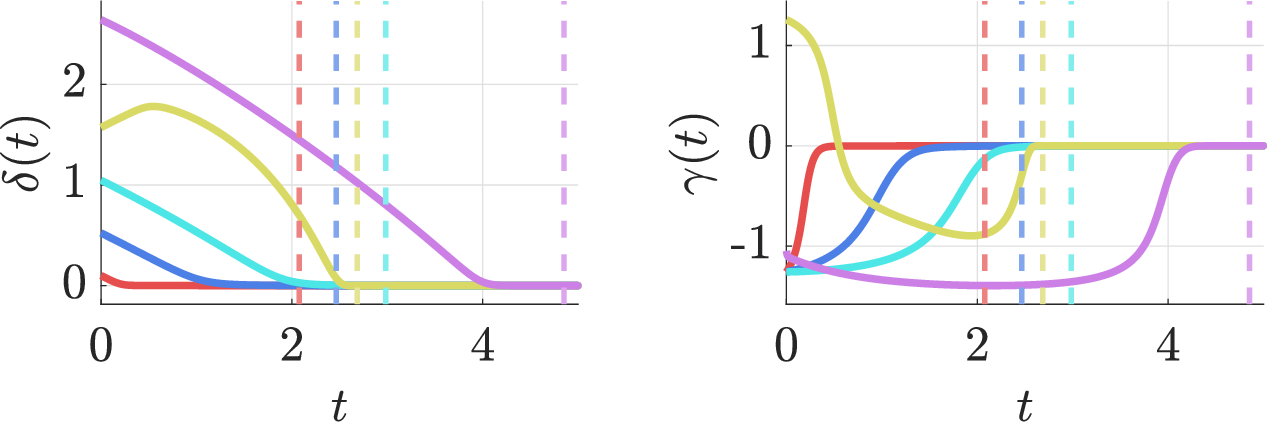}
\caption{The plots of the polar angle $\delta$ and the line-of-sight angle $\gamma$ clearly illustrate the nonundershooting behavior of $\delta(t)$, which remains nonnegative for all time. Additionally, $\gamma(t)$ does not exhibit any abrupt, near-discontinuous jumps at $\delta(t) = 0$.}
\label{fig:polar_nonov2}
\end{subfigure}
\caption{Simulation with the control law derived in Theorem~\ref{thm:nonov2} with $c_1$ chosen as~\eqref{eq:nonov2_c1}, $c_2 = 1$, and $v = 0.5$.}
\label{fig:sim_nonov2}
\end{figure}

\section{Interpretation of the Designs with Distance in the Role of Time}

It would have been confusing, or at least distracting, to initially introduce our designs through the treatment of $\ln(\rho_0/\rho)$ and $1/\rho$ as time-advancing quantities. But it is important to see this connection ex post facto. 

Consider the two models
\begin{eqnarray}
\label{eq-modellnrho0rho}
&\dfrac{d\delta}{d\ln(\rho_0/\rho)} = \tan\gamma\,, \quad \dfrac{d\tan\gamma}{d\ln(\rho_0/\rho)} = -\bar\omega&
\\
\label{eq-model1rho}
&\dfrac{d\delta}{d(1/\rho)} = \rho\tan\gamma\,, \quad \dfrac{d\tan\gamma}{d(1/\rho)} = -\rho\bar\omega\,.&
\end{eqnarray}
From the model \eqref{eq-modellnrho0rho}, 
the design in Theorem~\ref{thm:Dubins-FT-stabilize} follows, 
and from the model \eqref{eq-model1rho}, 
the design in Theorem~\ref{thm:Dubins-FT-stabilize2}. In these models, $\ln(\rho_0/\rho)$ and $1/\rho$ are time-advancing. If one wishes to constrain $\delta$ to the interval $(-\pi,\pi)$, one writes the first ODE for $2\tan(\delta/2)$, such as, for example, the model
\begin{subequations}
\label{eq-modelrhotandelta}
\begin{align}
\dfrac{d\left(2\tan\frac{\delta}{2}\right)}{d(1/\rho)} &= \rho\frac{1}{\cos^2\frac{\delta}{2}}\tan\gamma =\rho\frac{2\tan\frac{\delta}{2}}{\sin\delta}\tan\gamma\\
\dfrac{d\tan\gamma}{d(1/\rho)} &= -\rho\bar\omega\,,   
\end{align}
\end{subequations}
from which the design in Theorem \ref{thm:Dubins-FT-stabilize3} follows. 

The point of view with the distance $\rho$ in a time-like role doesn't make the designs easier, especially for the ``time-varying'' models \eqref{eq-model1rho} and \eqref{eq-modelrhotandelta}, with coefficients dependent on $\rho$. But these alternative models do enhance the understanding of the problem and design challenges. For example, due to the standard ``double integrator'' structure in the model \eqref{eq-modellnrho0rho}, the only challenge is in the closed-loop analysis, after the feedback design (by backstepping or forwarding, for the double integrator). 

The model \eqref{eq-modellnrho0rho} indicates that the Dubins vehicle \eqref{eq:unicycle_polar} is, in fact, feedback-linearizable, using steering $\omega$ as control, in an appropriate time scale. For example, in the time scale $\ln(\rho_0/\rho)$, the transformation $(\delta,\gamma)\mapsto (\delta,\tan\gamma)$ is a local ($|\gamma|<\pi/2$) diffeomorphism which, along with control \eqref{eq-third-controller1}, transforms the Dubins vehicle model \eqref{eq:unicycle_polar} into the linear system \eqref{eq-modellnrho0rho}, along with nonlinear time-varying ``zero dynamics''
\begin{equation}
\dfrac{d(t/\rho_0)}{d\ln(\rho_0/\rho)} = \frac{1}{v}{\rm e}^{-\ln(\rho_0/\rho)} \sqrt{1+\tan^2\gamma}\,,
\end{equation}
where the scaled time $t/\rho_0$ plays the role of a state variable and $\ln(\rho_0/\rho)$ plays the role of advancing time. If a controller is designed to ensure that $\sqrt{\delta^2+\tan^2\gamma}\leq M \sqrt{\delta_0^2+\tan^2\gamma_0} $ for $M\geq 1$, the ``zero dynamics state'' $t$ cannot grow above $t_1 =\frac{\rho_0}{v}\sqrt{1+M^2(\delta_0^2+\tan^2\gamma_0)}$. 


\section{Conclusion}

We leverage the interpretation of distance as a time-like variable to design deadbeat parking control laws for the Dubins vehicle model, particularly relevant when the forward velocity cannot be directly actuated. Although the control laws employ an inverse-distance gain, we establish that the resulting control input remains bounded for all time. These results extend to safety-critical scenarios in which the algorithms ensure parking without crossing in front of the target, enforce deceleration as the system approaches the target, and prevent curb violations. While the current formulation is restricted to a stationary target, in upcoming work, we consider moving targets and, ultimately, pursuer–evader games.


\bibliographystyle{IEEEtranS}
\bibliography{root}

\appendix
\section{Appendix A}
\label{appA}

\begin{proof}[Proof of Lemma \ref{lem1}]
For $\rho\in(0, \rho_0]$, $\dfrac{dV}{d\rho} \geq \dfrac{a}{\rho}V$ can be rewritten as $\dfrac{dV}{d\ln(\rho_0/\rho)}\leq -aV$ and $\dfrac{dV}{d\rho} \geq \dfrac{a}{\rho^2}V$ as $\dfrac{dV}{d(\rho/\rho_0-1)} \leq - \dfrac{a}{\rho_0}V$, from which the results follow by the comparison principle.
\end{proof}

\begin{proof}[Proof of Lemma \ref{lem2}]
In this proof we use the trigonometric identity $-|\cos\gamma| = - 1/\sqrt{1+\tan^2\gamma}$. For $\cos\gamma>0$ it follows that $\dot\rho\leq - v/\sqrt{1+\alpha(\rho/\rho_0)}$, which implies that $\rho/\rho_0\leq 1$ and hence $\dot\rho\leq - v/\sqrt{1+\alpha(1)}$. With the comparison principle, the upper bound $\rho(t)\leq \rho_0(1-t/t_1)$ follows. 
\end{proof}

\end{document}